\documentclass[11pt]{article}
\usepackage[utf8]{inputenc}
\usepackage{amssymb,graphicx,mathtools,amsfonts}
\usepackage{amsmath}
\usepackage{amsthm}
\usepackage{hyperref}
\usepackage[dvipsnames]{xcolor}

\usepackage{a4wide}
\catcode`\@=11
\def\theequation{\@arabic{\c@section}.\@arabic{\c@equation}}
\catcode`\@=12
\graphicspath{{images/}}
\newtheorem{theorem}{Theorem}[section] 
\newtheorem{lemma}{Lemma}[section] 
\newtheorem{proposition}{Proposition}[section]

\newtheorem{remark}{Remark}[section]


\newcommand{\mc} {\mathcal}

\newcommand{\e}{\epsilon}

\newcommand{\al} {\alpha}
\newcommand{\ba} {\beta}
\newcommand{\de} {\delta}

\newcommand{\om} {\Omega}

\newcommand{\la} {\lambda}

\newcommand{\noi} {\noindent}
\newcommand{\na} {\nabla}

\newcommand{\ds} {\displaystyle}
\newcommand{\ra} {\rightarrow}
\newcommand{\real}{\mathbb{R}}

\newcommand{\rnn}{\mathbb{R}^{N}}
\newcommand{\lv}{\lVert}
\newcommand{\rv}{\rVert}
\newcommand{\weak}{\rightharpoonup}
\newcommand{\nlap}{\Delta_N}
\newcommand{\sobo}{W_0^{1,N}}

\newcommand{\grad}{\nabla}
\newcommand{\ntrl}{\mathbb{N}}
\newcommand{\ka}{\kappa}
\begin{document}

\title{Quasilinear Schr\"{o}dinger Equation involving Critical Hardy Potential and Choquard type Exponential nonlinearity}

\author{Shammi Malhotra\footnote{Department of Mathematics, Indian Institute of Technology Delhi, Hauz Khas New Delhi 110016,  India, maz228085@maths.iitd.ac.in}, Sarika Goyal\footnote{Department of Mathematics, Netaji Subhas University of Technology,
		Dwarka Sector-3, Dwarka, Delhi, 110078, India, sarika1.iitd@gmail.com, sarika@nsut.ac.in},  and K. Sreenadh\footnote{Department of Mathematics, Indian Institute of Technology Delhi, Hauz Khas New Delhi 110016,  India, sreenadh@maths.iitd.ac.in}}
\date{}

\maketitle
\begin{abstract}
\noi In this article, we study the following quasilinear Schr\"{o}dinger equation involving Hardy potential and Choquard type exponential nonlinearity with a parameter $\alpha$
\begin{equation*}
\left\{
\begin{array}{l}
 - \Delta_N w - \Delta_N(|w|^{2\al}) |w|^{2\alpha - 2} w - \lambda \frac{|w|^{2\alpha N-2}w}{\left( |x| \log\left(\frac{R}{|x|} \right) \right)^N} 
 = \left(\int_{\Omega} \frac{H(y,w(y))}{|x-y|^{\mu}}dy\right) h(x,w(x))\; \mbox{in }\; \Omega, \\
 w > 0 \mbox{ in } \Omega \setminus \{ 0\}, \quad \quad w = 0 \mbox{ on } \partial \Omega,
\end{array}
\right.
\end{equation*}
where $N\geq 2$, $\alpha>\frac12$, $0\leq \lambda< \left(\frac{N-1}{N}\right)^N$, $0 < \mu < N$, $h : \mathbb R^N \times \mathbb R \rightarrow \mathbb R$ is a continuous function with critical exponential growth in the sense of the Trudinger-Moser inequality and $H(x,t)= \int_{0}^{t} h(x,s) ds$ is the primitive of $h$. With the help of Mountain Pass Theorem and critical level which is obtained by the sequence of Moser functions, we establish the existence of a positive solution for a small range of $\lambda$. Moreover, we also investigate the existence of a positive solution for a non-homogeneous problem for every $0\leq \lambda <\left(\frac{N-1}{N}\right)^N.$ To the best of our knowledge, the results obtained here are new even in case of $N$-Laplace equation with Hardy potential. 

\medskip

\noindent \textbf{Key words:} Quasilinear Schr\"{o}dinger Operator, Hardy Potential, Choquard exopential nonlinearity, Critical Dimension, Trudinger-Moser inequality, Variational methods.\\
\noindent \textbf{2020 MSC: 35B33, 35J20, 35J75, 35J92}
\end{abstract}

\newpage
\section{Introduction}
In this article, we investigate the existence of a positive solution to the following quasilinear Schr\"{o}dinger equation with parameter having Hardy potential and critical Choquard exponential type nonlinearity 
\begin{equation*}\label{pq}
\tag{$ P_{\lambda}$}\left\{
\begin{array}{l}
 - \nlap w - \nlap(|w|^{2\al}) |w|^{2\alpha - 2} w - \lambda \frac{|w|^{2\alpha N-2}w}{\left( |x| \log\left(\frac{R}{|x|} \right) \right)^N} 
 = \left(\int_{\om} \frac{H(y,w(y))}{|x-y|^{\mu}}dy\right) h(x,w(x))\quad \mbox{in }\; \om, \\
 w > 0 \mbox{ in } \om \setminus \{ 0\}, \quad \quad w = 0 \mbox{ on } \partial \om,
\end{array}
\right.
\end{equation*}
where $\om$ is smooth bounded domain in $\rnn$ containing the origin, $\Delta_N(w) := div(|\grad w|^{N-2} \grad w )$ denotes the $N$-Laplacian, $N\geq 2$, $\al>\frac12$ is a parameter,  $ 0 < \lambda < \left( \frac{N-1}{N} \right)^N$,   $R \geq e^{2/N} \sup_{x \in \om} \{ |x| \}$, $0 < \mu < N$, and the function $H(x,s):=\int_{0}^{s} h(x,t) dt$, where $h(x,s)$ is a continuous function that behaves like $\exp\left(|s|^{\frac{2 \al N}{N-1}} \right)$ as $s \to \infty$, and $o(|s|^{\al N})$ as $s\to 0^+$. Moreover, $h$ satisfies some more suitable conditions which will be stated later.\\

\noi The operator present in our equation is known as the Quasilinear Schr\"{o}dinger operator and problem driven by these operators are known as Quasilinear Schr\"{o}dinger equations. Research on these equations has been active over the past two decades, with main contributions from Liu and Wang \cite{liu2003soliton} and Poppenberg et al. \cite{poppenberg2002existence}. These early studies focused on finding standing wave solutions for the equation:
\begin{equation}\label{shrodinger_equation}
\iota \partial_t w =  - \Delta w + V(x) w - f(|w|^2)w - \eta \Delta h(|w|^2) h^{\prime}(|w|^2) w.  \end{equation} 
\noi Here, $V(x)$ represents a potential with $x \in \rnn$; $f,h$ are real-valued functions, and $\eta \in \real$ is a constant. They proved the existence of a solution with the help of the constrained minimization argument as the associated functional is not well defined in the natural $H^1(\rnn)$ Sobolev space. \\
In 2004, the authors in \cite{colin2004solutions} introduced a novel variable substitution, enabling the study of this problem in the $H^1(\mathbb R^N)$ space and prove the existence of a solution. Perturbation methods are also employed to obtain the existence of a solution (see, for example, \cite{liu_critical_perturbation_method, liu_multiple_perturbation_method}). \\
The equation of the type \eqref{shrodinger_equation} represents a range of physical phenomena depending on the form of $h(s).$ For instance, with $h(s) = s$, Kurihara \cite{kurihara1981large} modeled the equation (\ref{shrodinger_equation}) as the superfluid film equation in plasma physics, whereas for $h(s) = (1+s)^{1/2}$, it can be used to model the self channelling of an ultra-high power laser in a matter \cite{borovskii1993dynamic, brandi1993relativistic}. \\

\noi Moreover, the operator in the problem \eqref{pq} is driven by a parameter $\alpha$. These parameter-operators are particularly relevant in plasma physics and fluid mechanics, where they play pivotal role in modeling soliton motions \cite{laedke1983evolution} and deforming of solitons in the presence of interactions \cite{nakamura1977damping}. Additionally, they appear in the study of Heisenberg ferromagnet's and magnons \cite{bass1990nonlinear, kosevich1990magnetic, lange1995time}, in dissipative quantum mechanics \cite{hasse1980general}, and as well as in condensed matter theory \cite{makhankov1984non}. \\
A variety of problems are based on this quasilinear operator with parameter $\alpha$. For instance, in \cite{do_soliton_R2, do_soliton_RN}, a specific choice of $\al = 1$ leads to the following equation:$$ - \Delta w + V(x) w - \Delta(w^2) w = |w|^{q-1} w + |w|^{p-1}w \mbox{ in } \rnn, $$ where $\la$ is a positive parameter, $3 < q < p \leq 2(2^*) - 1$ and $2^* = 2N/(N-2) $ is the critical Sobolev exponent (in dimension $N \geq 3$). The authors applied the concentration-compactness principle to establish the existence of solutions, beginning with dimension $N = 2$ and then extending to higher dimensions. \\

\noi The nonlinearity in problem \eqref{pq} is non-local and is commonly referred in the literature as Choquard-type nonlinearity. To handle such Choquard nonlinearity, we employ the Hardy-Littlewood-Sobolev (HLS) inequality, as stated in \cite{lieb2001analysis}:\\
For $\mu \in (0,N)$, $f \in L^p ( \rnn)$ and $g \in L^q ( \rnn)$ with $p,$ $q > 1$,  $\frac{1}{p} + \frac{\mu}{N} + \frac{1}{q} = 2$, there exists a constant $C_{p,q,N,\mu}>0$ such that
\begin{equation}\label{hls_inequality}
    \int_{\rnn} \int_{\rnn} \frac{f(x) \, g(y)}{|x - y|^{\mu}} \, dx\, dy  \leq  C_{p,q,N,\mu} \lVert f \rVert_{L^p (\rnn)} \lVert g \rVert_{L^q (\rnn)}.
\end{equation}
If $p = q = \frac{2 N}{2N - \mu}$ then $$C_{p,q,N,\mu} := C_{N,\mu} = \pi^{\frac{\mu}{2}} \frac{\Gamma\left( \frac{N}{2} - \frac{\mu}{2} \right)}{\Gamma\left(N - \frac{\mu}{2} \right)} \left[ \frac{\Gamma\left( \frac{N}{2} \right)}{\Gamma(N)} \right].$$ In this case, there is an equality in \eqref{hls_inequality} if and only if $f =$ (constant) $g$ and $$f(x) = c_0(a^2 + |x-b|^2)^{-\frac{(2N - \mu)}{2}}$$ for some $c_0 \in \mathbb{C}, 0 \neq a \in \real$ and $b \in \rnn$. \\

\noi One of the pioneering studies on Choquard nonlinearity was conducted by Pekar, who analyzed the equation $$ -\Delta u + u = (I_2 * |u|^2) u \mbox{ in } \mathbb{R}^3 $$ with $I_2$ being the Reisz potential. Pekar used this model to explore the interaction of free electrons with photons in an ionic lattice \cite{pekar1954untersuchungen}. There are other physical applications in which this nonlinearity appears, such as the collapse of galaxy fluctuations of scalar field dark matter \cite{guzman2003newtonian}, and Bose-Einstein condensation \cite{dalfovo1999theory}. Numerous researchers have studied Choquard nonlinearity problems in critical dimensions; see, for example, \cite{arora2019n, biswas2022quasilinear, de2014elliptic} and references therein. \\

\noi Moreover, the nonlinearity $h(x,s)$ is of critical growth in the sense of the following Trudinger-Moser inequality introduced in \cite{moser1971sharp} and stated as for $N\geq 2$, $w \in W^{1,N}_0(\om)$
\begin{equation}\label{TM-ineq}
\sup_{\|w\|\leq 1}\int_\om \exp(\beta|w|^{\frac{N}{N-1}})~dx < \infty
\end{equation}
if and only if $\beta \leq \al_N$, where $\al_N = N\omega_{N-1}^{\frac{1}{N-1}}$ and $\omega_{N-1}=$ $(N-1)$-dimensional surface area of $\mathbb{S}^{N-1}$. 

\noindent One of the initial works in critical dimension problem was done by Adimurthi in \cite{adimurthi1989positive}. The author showed the existence of solutions for Laplace operator in the unit disc of $\mathbb{R}^2$. Later in \cite{adimurthi1990existence}, Adimurthi extended the problem for the $N$-Laplacian operator by using the method of artificial constraint for both subcritical growth and critical growth of exponential nonlinearity. In $1994$, Figueiredo et. al \cite{de2014elliptic} studied the problem for the Laplacian operator in dimension $2$. However, they used the classical critical point theory along with the functions associated with the optimal Trudinger-Moser inequality. Using the same idea, the author \cite{marcos1996semilinear} generalized the problem for $N$-Laplacian. Additionally, multiplicity results for critical dimension problems have been established by various authors, including those in \cite{giacomoni2007global}, through the application of monotonicity and variational methods.\\

\noi Inspired by the above works, in this paper, we study quasilinear equations involving critical Choquard-type exponential nonlinearity with a parameter. Additionally, we incorporate a Hardy term, which adds the singular behaviour in our equation. It introduces further challenges to solve the problem. To address this, we utilize the following critical dimension Hardy inequality, as proved in \cite{sandeep2002existence}:
\begin{equation*}
\ds \int_{\om}  \frac{|w(x)|^N}{\left(|x| \log \left( \frac{R}{|x|}\right)\right)^N } dx\leq \frac{1}{C_N} \int_{\om} |\grad w(x) |^N dx \;\mbox{ for all }\; w \in \sobo(\om),
\end{equation*}
where $\om$ be any domain in $\rnn$ containing the origin, $R \geq e^{2/N} \ds\sup_{x \in \om} \{ |x| \}$ and $ C_N = \left( \frac{N-1}{N} \right)^N$ is the best constant. Many studies related to this inequality, along with additional applications, can be found in \cite{shen2004nonlinear, zhang2008class} and references therein.\\
The results presented are new in the sense that the problems considered has Hardy perturbation in the Schr\"{o}dinger equation combined with a non-local exponential nonlinearity and a parameter-driven operator. The presence of the parameter term, $\nlap(|w|^{2\al}) |w|^{2\alpha - 2} w$, imposes a critical exponential growth of the form $\exp\left(|s|^\frac{2 \al N}{N-1} \right)$, and the Hardy term takes the form $\frac{|w|^{2\al N -2}w}{\left(|x|\log \left( \frac{R}{|x|}\right)\right)^N} $. To establish the compactness threshold for Palais-Smale sequences, the base value $\tau$ at minimax is used and in the blow-up analysis, the growth behavior of the transformation was rigorously examined (see Proposition \ref{limit_polynomial_growth}). To the authors' knowledge, no such work is being done of this type. Utilizing a sequence of Moser functions to obtain threshold and the Mountain Pass Theorem yield the existence of the a solution. \\

\noi We assume that $h:\om \times \real \to \real $ to be of the form $h(x,s) = f(x,s) \exp{\left(|s|^{\frac{2\alpha N}{N-1}}\right)}$ and satisfy the following assumptions
\begin{enumerate}
    \item[$(h_1)$] $h:\om \times \real \to \real $ is continuous for all $x \in \overline{\om}$, and $h(x,s) = 0$ for $s \leq 0$ and $h(x,s) > 0 $ for all $s>0$.
    \item[$(h_2)$] There exists $R_0$,  $M_0 >0$ such that for all $s \geq R_0$ and $x \in \om$, we have $$ 0 < H(x,s) \leq M_0 h(x,s), $$ where $H(x,s) = \int_0^{s} h(x,t) \, dt$.
    \item[$(h_3)$] There exists $\theta_0 > \alpha N $ such that $$ 0 < \theta_0 H(x,s) \leq h(x,s)s \mbox{ for all }  (x,s) \in \om \times (0,\infty).$$ 
    \item[$(h_4)$] $\ds\lim_{s \to 0^+} \frac{H(x,s)}{|s|^{\al N}} = 0$ uniformly in $x \in \om.$
    \item[$(h_5)$] For any $\e > 0$ $$\displaystyle\lim_{t\to\infty}\sup_{x\in\overline\om} f(x,t)\exp\left(-\e|t|^{\frac{2\al N}{N-1}}\right)=0 \mbox{ and } \displaystyle\lim_{t\to\infty}\inf_{x\in\overline\om}f(x,t) {\exp(\e |t|^{\frac{2 \al N}{N-1}})}=\infty.$$
    \item[$(h_6)$] $\ds\lim_{s\to \infty} \frac{s h(x,s) H(y,s)}{\exp\left({2|s|^{\frac{2\alpha N}{N-1}}}\right)} = \infty$ uniformly in $x$, $y \in \om.$
\end{enumerate}
\begin{remark} One function which satisfies the above $(h_1)-(h_6)$ hypothesis is given by $h(x,s) = f(x,s) \exp(|s|^{\frac{2\al N}{N-1}})$, where $f(x,s)=\left\{\begin{array}{lr}
		{s^{a + {\al N}} \exp(d s^{r})},\; \mbox{if} \; s> 0\\
		0, \; \mbox{if} \; s\leq 0,	\end{array}\right.$ for $0<d\leq \al_N, 1 \leq r < \frac{2\al N}{N-1}$ and for some $ a > 0$.\end{remark}

\noindent Before mentioning the main result of this paper, we would like to mention that the range of the $\lambda$ for which the existence of the solution is shown to be dependent on the minimax level of the transform energy  functional $J_{0}: \sobo(\om) \to \real$, defined as:
\begin{equation*}
J_0(w)=\frac{1}{N}\displaystyle\int_{\om }|\nabla w|^{N}dx -\frac 12\int_{\om}\int_{\om} \frac{H(y,g(w(y)))H(x,g(w(x)))}{|x-y|^\mu}dxdy.
\end{equation*}
This functional satisfies the Mountain pass geometry and  the minimax level
\begin{equation}\label{minimax_level_at_0}
\tau := \inf_{\gamma \in \Gamma} \max_{t \in [0,1]} J_{0}(\gamma(t)).
\end{equation}
where $$ \Gamma := \{ \gamma \in C([0,1],\sobo(\om): \gamma(0) = 0, J_0(\gamma(1)) < 0 \}$$
is well-defined.
This $\tau$ is positive and the existence of critical point at this minimax level is proved in \cite{marcos1996semilinear}. With this introduction, we state our main result on the existence of a solution for (\ref{pq}):
\begin{theorem}\label{main_theorem}
     Suppose $N\geq 2$, $0 < \mu <N, R \geq e^{2/N} \sup_{x \in \om}\{|x| \}$, $\al > \frac{1}{2}$ and $(h_1)-(h_6)$ hold. Then the problem (\ref{pq}) possesses a non-trivial positive weak solution for all $\la< C_N\left(\frac{L - \tau}{L} \right)$, where $L = \frac{1}{2\al N}  \left( \frac{2N - \mu}{2N} \al_N \right)^{N-1}$, and $\tau$ is the minimax level defined in \eqref{minimax_level_at_0}.  
\end{theorem}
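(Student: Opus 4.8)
The plan is to pass to a semilinear setting through the dual change of variables $w=g(v)$, where $g$ is the solution of the ODE that linearises the principal part of \eqref{pq}, so that $-\nlap w-\nlap(|w|^{2\al})|w|^{2\al-2}w$ is carried onto $-\nlap v$. Under this substitution I would work with the energy functional $J_\la:\sobo(\om)\to\R$,
\[
J_\la(v)=\frac1N\int_\om|\grad v|^N\,dx-\frac{\la}{2\al N}\int_\om\frac{|g(v)|^{2\al N}}{\left(|x|\log(R/|x|)\right)^N}\,dx-\frac12\int_\om\int_\om\frac{H(y,g(v))\,H(x,g(v))}{|x-y|^\mu}\,dx\,dy,
\]
which, unlike the original quasilinear energy, is well defined and $C^1$ on $\sobo(\om)$, and whose nontrivial critical points $v$ yield weak solutions $w=g(v)$ of \eqref{pq}. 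Since $(h_1)$ gives $h(x,s)=0$ for $s\le 0$, I would truncate $v\mapsto v^+$ in the nonlinear terms so that any critical point is nonnegative; the singular Hardy weight is harmless away from the origin, and the strong maximum principle then yields $w>0$ on $\om\setminus\{0\}$.

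Next I would verify the mountain-pass geometry of $J_\la$. Because $\al>\tfrac12$, one has $|g(v)|^{2\al N}\sim|v|^{2\al N}$ near the origin, so both the Hardy term and, by $(h_4)$ together with the Hardy--Littlewood--Sobolev inequality \eqref{hls_inequality} and the Trudinger--Moser inequality \eqref{TM-ineq}, the Choquard term are of higher order than $\|v\|^N$; hence $J_\la$ has a strict local minimum at $0$ with a uniform mountain-pass barrier. A point $e$ with $J_\la(e)<0$ is produced from the superquadratic Ambrosetti--Rabinowitz condition $(h_3)$, and $(h_2)$--$(h_3)$ also force every Palais--Smale sequence to be bounded in $\sobo(\om)$.

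The decisive step is the compactness analysis, where I would establish that $J_\la$ satisfies $(PS)_c$ for every level $c<\left(1-\tfrac{\la}{C_N}\right)L$. The role of $\la$ here is to control the Hardy term: by the critical Hardy inequality and the sharp asymptotics $|g(t)|^{2\al N}\le 2\al\,|t|^N$ of Proposition \ref{limit_polynomial_growth}, one has
\[
\frac{\la}{2\al N}\int_\om\frac{|g(v)|^{2\al N}}{\left(|x|\log(R/|x|)\right)^N}\,dx\le\frac{\la}{N\,C_N}\int_\om|\grad v|^N\,dx,
\]
so that the effective kinetic energy carries the factor $1-\tfrac{\la}{C_N}$. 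A blow-up analysis of the exponential Choquard nonlinearity then shows that, below the level $\left(1-\tfrac{\la}{C_N}\right)L$, the relevant concentration mass stays strictly under $\left(\tfrac{2N-\mu}{2N}\al_N\right)^{N-1}$; using \eqref{hls_inequality} to reduce $\int_\om\int_\om H\,H/|x-y|^\mu$ to a power of $\|H(\cdot,g(v))\|_{L^{2N/(2N-\mu)}}$ and then the subcritical form of \eqref{TM-ineq}, the nonlinear terms pass to the limit and the weak limit is a nontrivial critical point. To locate the minimax level below this threshold I would use that the Hardy term is nonnegative, whence $J_\la\le J_0$ pointwise; the admissible class for $J_\la$ thus contains $\Gamma$, giving $c_\la\le\tau$. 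Together with the bound $\tau<L$ obtained by testing $J_0$ along the sequence of Moser functions, the hypothesis $\la<C_N\tfrac{L-\tau}{L}$ is precisely the inequality $\tau<\left(1-\tfrac{\la}{C_N}\right)L$. Hence $c_\la\le\tau<\left(1-\tfrac{\la}{C_N}\right)L$ lies inside the compactness range, and the Mountain Pass Theorem delivers a critical point of $J_\la$ at level $c_\la$.

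I expect the blow-up/compactness step to be the main obstacle. Two features make it delicate: the change of variables distorts the concentration profile, so the precise polynomial growth of $g$ recorded in Proposition \ref{limit_polynomial_growth} must be propagated through every estimate; and the doubling produced by the Choquard convolution, combined with the Hardy--Littlewood--Sobolev exponent $\tfrac{2N}{2N-\mu}$, has to be matched exactly against $\al_N$ in order to identify the sharp threshold $L$. The sequence of Moser functions enters precisely here, both to certify $\tau<L$ and, via $(h_5)$--$(h_6)$, to guarantee that the concentration energy cannot be undercut, so that the strict inequality $c_\la<\left(1-\tfrac{\la}{C_N}\right)L$ is genuinely available throughout the admissible range $0\le\la<C_N\tfrac{L-\tau}{L}$.
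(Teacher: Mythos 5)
Your architecture coincides with the paper's: the change of variables $w=g(u)$, the transformed $C^1$ functional $J_\lambda$ on $\sobo(\om)$, the mountain-pass geometry from $(h_2)$--$(h_4)$, the threshold $L=\frac{1}{2\al N}\left(\frac{2N-\mu}{2N}\al_N\right)^{N-1}$ certified by Moser functions together with $(h_6)$, the monotonicity $c_\lambda\le\tau$, and the arithmetic identification of $\lambda<C_N\frac{L-\tau}{L}$ with $\tau<\left(1-\frac{\lambda}{C_N}\right)L$ are all exactly the mechanisms the paper uses. (One small misattribution: the bound $|g(t)|^{2\al N}\le 2\al|t|^N$ is $(g_6)$ of Lemma \ref{L1}; Proposition \ref{limit_polynomial_growth} is a \emph{lower} bound on $g(s)^{2\al}$ for large $s$, needed only inside the Moser-function computation to recover the sharp constant in the exponent.)

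The one place where your plan overshoots what can be delivered is the claim that $J_\lambda$ satisfies $(PS)_c$ for every $c<\left(1-\frac{\lambda}{C_N}\right)L$. The Trudinger--Moser step that makes the Choquard term subcritical requires the a priori bound $\frac{\|u_k\|^N}{N}<L$, and the energy identity only yields $\left(1-\frac{\lambda}{C_N}\right)\frac{\|u_k\|^N}{N}\le c+\frac12\iint\frac{H\,H}{|x-y|^\mu}+o(1)$; the Choquard energy on the right is not controlled by the level unless it vanishes along the sequence. The paper sidesteps this: it never proves strong convergence of the Palais--Smale sequence. Instead it shows (Lemmas \ref{hardy_weak_limit}, \ref{PS-ws}, \ref{kc_ws}, via the Boccardo--Murat a.e.\ gradient convergence of Proposition \ref{pointwise_gradient}) that the weak limit $u$ of \emph{any} bounded Palais--Smale sequence is a weak solution, at every level, and then uses the threshold only to rule out $u\equiv 0$: under that assumption Lemma \ref{PS-ws} kills the Choquard energy, the bound $\frac{\|u_k\|^N}{N}\le\tau\left(1-\frac{\lambda}{C_N}\right)^{-1}<L$ becomes available, Trudinger--Moser then forces $\|u_k\|\to 0$ and hence $\beta^*(\lambda)=0$, a contradiction. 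If you reorganize your compactness step along these lines (weak limit is a solution always; threshold used only for nontriviality), your argument closes; as literally stated, the full $(PS)_c$ assertion is both unproven and unnecessary.
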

\begin{remark}
 It is not clear whether we can show the existence of a  positive solution of the problem (\ref{pq}) for $\la \in [\la_0, C_N)$ with $\lambda_0 = C_N\left(\frac{L - \tau}{L} \right)$. We leave this as an open question.
\end{remark}

\noindent However, for every $\la\in \left(0, (\frac{N-1}{N})^N\right)$, we obtain the existence of solution of the following non-homogeneous problem corresponding to (\ref{pq})  
\begin{equation}\label{pq_non_homo} 
\tag{$ P_{\lambda,NH}$}\left\{
\begin{array}{l}
 \mathcal{L}(w) - \lambda \frac{|w|^{2 \al N-2}w}{\left( |x| \log\left(\frac{R}{|x|} \right) \right)^N}
 = \left(\int_{\om} \frac{H(y,w(y))}{|x-y|^{\mu}}dy\right) h(x,w(x)) + \kappa q(x) \mbox{ in } \om, \\
w = 0 \mbox{ on } \partial \om,
\end{array}
\right.
\end{equation}
where $\mathcal{L}(w):=- \nlap w - \nlap(|w|^{2\al}) |w|^{2\alpha - 2} w$, $\om$ is smooth bounded domain in $\rnn$ containing the origin,  $N\geq 2$, $0 \leq \lambda < \left( \frac{N-1}{N}\right)^N$, $0 < \mu < N, \kappa > 0$, $ q \in (\sobo(\om))^*$, 
 $ R \geq e^{2/N} \sup_{x \in \om} \{ |x| \}, ~\al > \frac{1}{2}$ is a parameter. \\
\noindent We state the following existence theorem corresponding to the problem (\ref{pq_non_homo}): 
\begin{theorem}\label{main_theorem_non_homo}
     Suppose  $(h_1)-(h_6)$ holds. Then the problem (\ref{pq_non_homo}) possesses a non-trivial weak solution for small non-negative values of $\ka$.
\end{theorem}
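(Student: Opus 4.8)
The plan is to reuse the change of variables $w=g(v)$ that linearises the principal operator $\mc L$ and reduces \eqref{pq_non_homo} to a semilinear problem on $\sobo(\om)$, and then to produce a nontrivial critical point as a \emph{local minimiser of negative energy} via Ekeland's variational principle. Here $g$ is the odd increasing diffeomorphism of $\real$ fixed by the ODE $g'(t)^N\big(1+(2\al)^{N-1}|g(t)|^{(2\al-1)N}\big)=1$, $g(0)=0$, so that $\frac1N\int_\om|\grad v|^N dx$ accounts for both terms of $\mc L(g(v))$. Integrating this identity gives the pointwise bounds $|g(t)|^{2\al N}\le 2\al\,|t|^N$, $0<g'\le 1$, $g'(0)=1$; moreover $g(v)\sim v^{1/(2\al)}$ for large $v$, so that $|g(v)|^{2\al N/(N-1)}\sim|v|^{N/(N-1)}$ and the Trudinger--Moser inequality \eqref{TM-ineq} applies directly to $H(x,g(v))$. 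I would work with the perturbed functional
$$
J_{\la,\ka}(v)=\frac1N\int_\om|\grad v|^N dx-\frac{\la}{2\al N}\int_\om\frac{|g(v)|^{2\al N}}{\big(|x|\log(R/|x|)\big)^N}dx-\frac12\int_\om\int_\om\frac{H(y,g(v))H(x,g(v))}{|x-y|^\mu}dx\,dy-\ka\,\langle q,g(v)\rangle,
$$
which is $C^1$ on $\sobo(\om)$ by \eqref{hls_inequality}, \eqref{TM-ineq} and $(h_1)$--$(h_5)$. The key structural point --- and the reason the full range $\la<C_N=\big(\tfrac{N-1}{N}\big)^N$ is admissible here, unlike in Theorem \ref{main_theorem} --- is that the bound $|g(v)|^{2\al N}\le 2\al|v|^N$ together with the critical Hardy inequality of \cite{sandeep2002existence} yields
$$
\frac{\la}{2\al N}\int_\om\frac{|g(v)|^{2\al N}}{\big(|x|\log(R/|x|)\big)^N}dx\le\frac{\la}{N}\int_\om\frac{|v|^N}{\big(|x|\log(R/|x|)\big)^N}dx\le\frac{\la}{N\,C_N}\int_\om|\grad v|^N dx,
$$
so the principal part of $J_{\la,\ka}$ is coercive, $\frac1N\int_\om|\grad v|^N dx-\frac{\la}{2\al N}\int_\om\frac{|g(v)|^{2\al N}}{(|x|\log(R/|x|))^N}dx\ge c_\la\|v\|^N$ with $c_\la:=\tfrac1N\big(1-\tfrac{\la}{C_N}\big)>0$, precisely when $\la<C_N$.

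Next I would establish the local geometry for small $\ka>0$. Near the origin $(h_4)$ forces $H(x,g(v))=o(|v|^{\al N})$, so by \eqref{hls_inequality} and the Sobolev embeddings the Choquard term is $O(\|v\|^{2\al N})$ with $2\al N>N$ (as $\al>\tfrac12$), while $|\langle q,g(v)\rangle|\le\|q\|_*\|v\|$ since $0<g'\le1$. Hence
$$
J_{\la,\ka}(v)\ge c_\la\|v\|^N-C\|v\|^{2\al N}-\ka\|q\|_*\|v\|
$$
for $\|v\|\le\rho$. Choosing $\rho>0$ small makes the first two terms dominate as $\tfrac{c_\la}{2}\rho^N$ on the sphere $\|v\|=\rho$, and then there is $\ka_0>0$ such that $\inf_{\|v\|=\rho}J_{\la,\ka}>0$ for all $0<\ka<\ka_0$. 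On the other hand, fixing $v_0$ with $\langle q,v_0\rangle>0$ and using $g(tv_0)=tv_0+o(t)$, the linear term $-\ka t\langle q,v_0\rangle$ dominates the $t^N$ term for small $t>0$, so $J_{\la,\ka}(tv_0)<0$. Therefore $c_1:=\inf_{\overline{B_\rho}}J_{\la,\ka}<0=J_{\la,\ka}(0)$, and Ekeland's variational principle produces a Palais--Smale sequence $(v_n)\subset\overline{B_\rho}$ with $J_{\la,\ka}(v_n)\to c_1$ and $J_{\la,\ka}'(v_n)\to0$.

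Finally I would run the compactness argument. The sequence $(v_n)$ is bounded, so up to a subsequence $v_n\weak v_1$ in $\sobo(\om)$. Since the level $c_1<0$ lies strictly below the Trudinger--Moser compactness threshold, no concentration of $\int_\om\exp(\al_N|v_n|^{N/(N-1)})\,dx$ can occur; the Ambrosetti--Rabinowitz conditions $(h_2)$--$(h_3)$ (with $\theta_0>\al N$) then allow one to pass to the limit in the nonlocal exponential term, exactly as in the convergence lemma used for the homogeneous functional $J_0$, via a generalised dominated-convergence argument based on \eqref{TM-ineq} and \eqref{hls_inequality}. This gives $v_n\to v_1$ strongly, whence $J_{\la,\ka}'(v_1)=0$ and $J_{\la,\ka}(v_1)=c_1<0$; in particular $v_1\neq0$, and $w_1=g(v_1)$ is the required nontrivial weak solution of \eqref{pq_non_homo} (for $\ka=0$ the problem is homogeneous and $0$ remains a critical point, so the genuine perturbation $\ka>0$ is what furnishes the negative critical value). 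I expect the compactness step to be the main obstacle: one must carefully control the double exponential integral $\int_\om\int_\om|x-y|^{-\mu}H(y,g(v_n))H(x,g(v_n))\,dx\,dy$ along the Palais--Smale sequence and confirm that the energy stays below the critical Trudinger--Moser level so that concentration is excluded, all while tracking the quasilinear transform $g$ in every estimate; the asymptotic $|g(v)|^{2\al N}\le2\al|v|^N$ is precisely what ties the admissible range to $\la<C_N$.
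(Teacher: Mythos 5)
Your route is genuinely different from the paper's. The paper runs the Mountain Pass Theorem on $J_{\lambda,NH}$ (Lemmas 5.1--5.3 give the geometry and boundedness of Palais--Smale sequences), extracts a weak limit, and invokes the convergence Lemmas \ref{hardy_weak_limit} and \ref{kc_ws} — which work for \emph{any} bounded Palais--Smale sequence, with no energy threshold — to conclude that the weak limit is a weak solution; nontriviality is then automatic because $u\equiv 0$ cannot solve an equation with the source $\kappa q\not\equiv 0$. You instead exploit the perturbation to drive $\inf_{\overline{B_\rho}}J_{\lambda,\kappa}$ below zero and apply Ekeland's principle, aiming for a local minimiser of negative energy. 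Your approach has the advantage that the Palais--Smale sequence lives in a small ball, so the Trudinger--Moser integrals are uniformly controlled by the smallness of $\rho$ (this, not the negativity of the level, is what excludes concentration — a level below the threshold does not by itself bound $\|v_n\|$ by a small number), and it identifies the solution as a genuinely ``small'' one. Your coercivity estimate via $(g_6)$ and the critical Hardy inequality, the local geometry, and the computation showing $J_{\lambda,\kappa}(tv_0)<0$ for small $t$ (using $g'(0)=1$ and $\|g(tv_0)-tv_0\|=o(t)$) are all correct, and like the paper you implicitly need $q\not\equiv 0$ and $\kappa>0$.

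The one step that does not go through as written is the claim that $v_n\to v_1$ strongly and hence $J_{\lambda,\kappa}(v_1)=c_1<0$. The obstruction is not the exponential nonlinearity (which your small ball handles) but the critical Hardy term: in the identity $\langle J_{\lambda,\kappa}'(v_n),v_n-v_1\rangle=o(1)$ the term
\begin{equation*}
\lambda\int_{\om}\frac{|g(v_n)|^{2\al N-2}g(v_n)g'(v_n)\,(v_n-v_1)}{\bigl(|x|\log(R/|x|)\bigr)^N}\,dx
\end{equation*}
is a pairing of two sequences that converge only weakly (in $L^{N/(N-1)}$ and $L^N$ respectively, via the Hardy inequality), and H\"older plus $(g_6)$, $(g_{10})$ only bound it by $\tfrac{\lambda}{C_N}\|v_n\|^{N-1}\|v_n-v_1\|$; the resulting $(S_+)$ inequality $c\|v_n-v_1\|^N\le \tfrac{\lambda}{C_N}\|v_n\|^{N-1}\|v_n-v_1\|+o(1)$ does not force $\|v_n-v_1\|\to 0$ for $\lambda$ up to $C_N$. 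Fortunately you do not need strong convergence for the theorem: the weak limit $v_1$ is a weak solution by exactly the arguments of Lemmas \ref{hardy_weak_limit} and \ref{kc_ws}, and $v_1\not\equiv 0$ follows directly by testing the equation — if $v_1\equiv 0$ then $\kappa\langle q,\phi\rangle=0$ for all $\phi$, contradicting $q\not\equiv 0$. With that substitution for the final step, your argument is complete and yields the same conclusion as the paper's.
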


\noindent The plan of the paper is as follows:
In section $2$, we first introduce the paper's notations and then define some basic spaces and the transformation require to study the problems (\ref{pq}) and (\ref{pq_non_homo}) variationally. Some properties of the transformation are also mentioned there. In section $3$, we prove the mountain pass geometry of the functional $J_{\lambda}$ associated with the problem (\ref{pq}). Also, we obtain the upper bound of the minimax level with the help of the Moser functions. Next, in section $4$, we see the behaviour of the Palais-Smale sequences related to the functional $J_{\lambda}$. We conclude the section by proving the Theorem \ref{main_theorem}. Finally, section $5$ discuss the corresponding non-homogeneous problem, and the proof of Theorem \ref{main_theorem_non_homo}. 
\section{Notations and Preliminaries}
Throughout the paper, we would like to use the following notations:
\begin{itemize}
    \item $A \subset \subset B$ means that $\bar{A}$ is compact in $B$. 
    \item The signs $\to, \weak$ denote the strong and weak convergence, respectively, in the corresponding spaces.
    \item If $A$ is any measurable set in $\rnn$, then $|A|$ denotes the $N$-dimensional Lebesgue measure.
    \item $B(x,R)$ denotes the $N$-dimensional ball in $\rnn$ centered at $x$ having radius $R$.
    \item $c, C_1, C_2,\cdots,$ and $ C', C'',\cdots$ , denote positive constants which may change from line to line. 
\end{itemize}

\noindent Let $1 \leq p < \infty$ and $u : \om \to \real$ be a measurable function. Then the Lebesgue space is denoted by $L^p(\om)$ and defined as: $$ L^p(\om) = \left\{ u : \om \to \real \mbox{ measurable } | \int_{\om} |u|^p~dx < \infty\right \} .$$ This is a Banach space endowed with the norm $\lv u \rv_{L^p(\om)}^p = \int_{\om} |u|^p ~ dx$. With the help of Lebesgue space, Sobolev space is denoted by $W^{1,N}(\om)$ and defined as $$ W^{1,N}(\om) = \left\{ u \in L^p(\om): \int_{\om} |\grad u|^p ~ dx < \infty  \right\}.$$
This is again a Banach space endowed with the norm $\lv u \rv_{W^{1,N}(\om)} := \left( \int_{\om} |u|^p + |\grad u|^p ~dx\right)^{\frac{1}{p}} $. One important subspace of this space is denoted by $\sobo(\om)$ and defined as the closure of $C_c^{\infty}(\om)$ functions with respect to the norm $\lv u \rv := \left( \int_{\om} |\grad u|^p ~dx\right)^{\frac{1}{p}}$.

\noindent The energy functional associated with the problem (\ref{pq}) is given by:
\begin{equation*}
\begin{split}
\tilde{J}_{\lambda}(w)=\frac{1}{N}\displaystyle\int_{\om }(1+(2\alpha)^{N-1}|w|^{N(2\al -1)})|\nabla w|^{N}dx & -\frac{\lambda}{2 \al N} \int_{\om} \frac{|w|^{2 \alpha N}}{\left( |x| \log\left( \frac{R}{|x|} \right) \right)^N}~dx \\ & -\frac 12\int_{\om}\int_{\om} \frac{H(y,w(y))H(x,w(x))}{|x-y|^\mu}dxdy.
\end{split}
\end{equation*}
Then one can see that the functional $\tilde{J}_{\lambda}$ is not well defined for all $w \in W_0^{1,N}(\om)$. So, to deal with it, we employ a transformation $w = g(u)$, which satisfies the following equation: 
\begin{equation}\label{g}
	\left\{
	\begin{array}{l}
	g^{\prime}(t)=\displaystyle\frac{1}{\left(1+(2\al)^{N-1}|g(t)|^{N(2\al-1)}\right)^{\frac{1}{N}}}~~\mbox{in}~~ [0,\infty),\\
	g(t)=-g(-t)~~\mbox{in}~~ (-\infty,0].
	\end{array}
	\right.
\end{equation}

\noindent The proof of the following lemma can be found in the following references \cite{colin2004solutions, li2015positive}.
\begin{lemma}\label{L1}
    The function $g$ satisfies the following properties:
\begin{itemize}
			\item[$(g_1)$] $g$ is uniquely defined, $C^{\infty}(\real)$ and invertible;
			\item[$(g_2)$] $g(0)=0$;
			\item[$(g_3)$] $0<g^{\prime}(t)\leq 1$ for all $t\in \mathbb{R}$;
			\item[$(g_4)$] $\frac{1}{2}g(t)\leq \al t g^{\prime}(t)\leq \al g(t)$ for all $t>0$;
			\item[$(g_5)$] $|g(t)|\leq |t|$ for all $t\in \mathbb{R}$;
			\item[$(g_6)$] $|g(t)|\leq (2\al)^{\frac{1}{2N\al}}|t|^{\frac{1}{2\al}}$ for all $t\in \mathbb{R}$;
			\item[$(g_7)$] $\displaystyle \lim_{t \to \infty} \frac{g(t)}{t^{\frac{1}{2\al}}} = (2 \alpha)^{\frac{1}{2N\alpha}}$;
			\item[$(g_8)$]  $|g(t)|\geq g(1)|t|$ for $|t|\leq 1$ and $|g(t)|\geq g(1)|t|^{\frac{1}{2\al}}$ for $|t|\geq 1$;
			\item[$(g_9)$] $g^{\prime \prime}(t)<0$ when $s>0$ and $g^{\prime \prime}(t)>0$ when $s<0$;
            \item[$(g_{10})$] $|g(t)|^{2 \al -1} g^{\prime}(t)\leq \frac{1}{(2 \al)^{\frac{N-1}{N}}}$ for all $t \in \real$.
		\end{itemize}
\end{lemma}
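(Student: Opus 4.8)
My plan is to reduce the entire lemma to the study of a single explicit primitive and then read off the ten properties from elementary monotonicity and convexity estimates. Since the ODE in \eqref{g} is autonomous with strictly positive right-hand side, I would separate variables: for $t \ge 0$ set
$$G(w) := \int_0^w \left(1 + (2\alpha)^{N-1} s^{N(2\alpha-1)}\right)^{1/N}\, ds, \qquad w \ge 0,$$
so that $g(t) = G^{-1}(t)$ on $[0,\infty)$, extended to $\mathbb{R}$ by oddness. Because $G'(w) = (1+(2\alpha)^{N-1}w^{N(2\alpha-1)})^{1/N} > 0$ is strictly increasing (here $\alpha > 1/2$ makes $N(2\alpha-1) > 0$), $G$ is a strictly increasing smooth bijection of $[0,\infty)$, which yields $(g_1)$ (existence, uniqueness, invertibility, smoothness) and $(g_2)$ at once. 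Properties $(g_3)$ and $(g_{10})$ are then purely algebraic: from $g'(t) = (1+(2\alpha)^{N-1}|g|^{N(2\alpha-1)})^{-1/N}$ the denominator is $\ge 1$, giving $0 < g' \le 1$; and raising the claimed inequality in $(g_{10})$ to the $N$-th power reduces it to the trivial $(2\alpha)^{N-1}|g|^{N(2\alpha-1)} \le 1 + (2\alpha)^{N-1}|g|^{N(2\alpha-1)}$.

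For $(g_9)$ I would differentiate the ODE once: for $t>0$ one gets $g''(t) = -(2\alpha-1)(2\alpha)^{N-1} g^{N(2\alpha-1)-1} g'(t)(1+(2\alpha)^{N-1}g^{N(2\alpha-1)})^{-1/N-1} < 0$, and oddness of $g$ (hence of $g''$) flips the sign for $t<0$. The technical heart is $(g_4)$, from which $(g_8)$ follows for free. Writing $w = g(t)$, the ratio $tg'(t)/g(t)$ equals $G(w)/(wG'(w))$, so $(g_4)$ is equivalent to the two-sided bound $\tfrac{1}{2\alpha} \le G(w)/(wG'(w)) \le 1$. The upper bound is convexity of $G$ (as $G'$ is increasing and $G(0)=0$, $G(w)=\int_0^w G' \le wG'(w)$). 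For the lower bound I would introduce $\Phi(w) := 2\alpha G(w) - wG'(w)$, compute $\Phi(0)=0$ and check that after the $wG''(w)$ term cancels one is left with $\Phi'(w) = (2\alpha-1)(1+(2\alpha)^{N-1}w^{N(2\alpha-1)})^{1/N-1} > 0$, whence $\Phi \ge 0$.

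The remaining size estimates follow by comparing $G$ with its leading term. Dropping the $1$ inside $G$ gives $G(w) \ge (2\alpha)^{(N-1)/N}\int_0^w s^{2\alpha-1}ds = (2\alpha)^{-1/N} w^{2\alpha}$; substituting $w=g(t)$ into $t = G(g(t)) \ge (2\alpha)^{-1/N} g(t)^{2\alpha}$ yields $(g_6)$, while the matching asymptotic $G(w)/w^{2\alpha} \to (2\alpha)^{-1/N}$ as $w\to\infty$ (the error from the $1$ being lower order, justified by L'Hôpital) gives $(g_7)$. Property $(g_5)$ is immediate from $g'\le 1$ and $g(0)=0$. Finally $(g_8)$ is a corollary of $(g_4)$: the upper bound $tg'/g \le 1$ says $g(t)/t$ is nonincreasing, so $g(t) \ge g(1)t$ on $(0,1]$; the lower bound $tg'/g \ge 1/(2\alpha)$ gives $\tfrac{d}{dt}\log(g(t)/t^{1/(2\alpha)}) = \tfrac1t(tg'/g - 1/(2\alpha)) \ge 0$, so $g(t)/t^{1/(2\alpha)}$ is nondecreasing and $g(t) \ge g(1)t^{1/(2\alpha)}$ on $[1,\infty)$, with oddness supplying the absolute-value form. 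I expect the only genuinely delicate point to be the claim of full $C^\infty(\mathbb{R})$ smoothness in $(g_1)$ at the origin, since $|s|^{N(2\alpha-1)}$ need not be smooth there when $N(2\alpha-1)$ is not an even integer; I would either follow the cited references \cite{colin2004solutions, li2015positive} verbatim on this point, or restrict the smoothness claim to $\mathbb{R}\setminus\{0\}$ together with the matching one-sided derivatives forced by oddness at $0$.
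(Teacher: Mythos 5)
Your proposal is correct, but it cannot be compared line-by-line with the paper's argument because the paper does not prove Lemma \ref{L1} at all: it simply defers to the references \cite{colin2004solutions, li2015positive}. Your separation-of-variables route --- defining the primitive $G(w)=\int_0^w\bigl(1+(2\alpha)^{N-1}s^{N(2\alpha-1)}\bigr)^{1/N}\,ds$ and realizing $g=G^{-1}$ on $[0,\infty)$, extended oddly --- is therefore a genuine, self-contained alternative, and I have checked that the key computations go through: the algebraic reductions of $(g_3)$ and $(g_{10})$; the sign of $g''$ for $(g_9)$; the identity $tg'(t)/g(t)=G(w)/(wG'(w))$ with $w=g(t)$, together with the fact that $\Phi(w)=2\alpha G(w)-wG'(w)$ satisfies $\Phi(0)=0$ and $\Phi'(w)=(2\alpha-1)\bigl(1+(2\alpha)^{N-1}w^{N(2\alpha-1)}\bigr)^{1/N-1}>0$, which yields $(g_4)$; the lower bound $G(w)\ge(2\alpha)^{-1/N}w^{2\alpha}$ for $(g_6)$ and its matching asymptotic for $(g_7)$ via L'H\^{o}pital; and the monotonicity of $g(t)/t$ and of $g(t)/t^{1/(2\alpha)}$ for $(g_8)$. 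What your approach buys over the paper's citation is verifiability inside the paper itself; what it costs is length. Your caveat about $(g_1)$ is also well-taken and is, if anything, a point where the stated lemma is too strong: when $N(2\alpha-1)$ is not an even integer, the integrand $\bigl(1+(2\alpha)^{N-1}|s|^{N(2\alpha-1)}\bigr)^{1/N}$ is not $C^\infty$ at $s=0$, so $g=G^{-1}$ is only finitely differentiable at the origin. Since the paper only ever uses $g$, $g'$, and the sign of $g''$ away from delicate points, your proposed weakening to $C^1(\real)\cap C^{\infty}(\real\setminus\{0\})$ (or deferring that single point to the cited references) loses nothing in the sequel.
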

\noindent The behaviour of $g$ in $(g_7)$ can be calculated and will be crucial in proving one of the important results. 
\begin{proposition} \label{limit_polynomial_growth}
Let $g$ be as in \eqref{g}. Then for any $k \in \ntrl \setminus \{1\}$ and $k \geq \left( 2 \left[ 1 - \left( \frac{B}{1+B} \right)^{\frac{1}{N}} \right] \right)^{-1}$ with $B =(2\al)^{N-1} (g(1))^{(2\al - 1)N}$ , we have $(g(s))^{2 \al } \geq (1-\frac{1}{k}) (2\al)^{\frac{1}{N}} s$ for all $s \geq S_k$, where $S_k$ has polynomial growth in $k$.
\end{proposition}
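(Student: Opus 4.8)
The plan is to reduce the stated inequality to a single ``defect'' estimate and then to locate, as explicitly as possible, the threshold at which that defect becomes small. First I would set $v(s) := (g(s))^{2\al}$ and differentiate the ODE \eqref{g}: substituting $g'(s) = \bigl(1 + (2\al)^{N-1}(g(s))^{N(2\al-1)}\bigr)^{-1/N}$ into $v'(s) = 2\al (g(s))^{2\al-1}g'(s)$ and simplifying gives the exact identity
\[
v'(s) = (2\al)^{1/N}\left(\frac{A(s)}{1+A(s)}\right)^{1/N}, \qquad A(s) := (2\al)^{N-1}(g(s))^{N(2\al-1)} .
\]
Since $\al>\tfrac12$ and $g$ is increasing, $A$ is strictly increasing on $[0,\infty)$ and $v'(s)\uparrow (2\al)^{1/N}$. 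Writing $G(s):=(2\al)^{-1/N}v(s)$, so that $G'(s)=\bigl(A(s)/(1+A(s))\bigr)^{1/N}$, the target $(g(s))^{2\al}\geq(1-\tfrac1k)(2\al)^{1/N}s$ is equivalent to $G(s)\geq(1-\tfrac1k)s$, i.e. to the defect bound
\[
D(s) := s-G(s) = \int_0^s\left[1-\left(\tfrac{A(t)}{1+A(t)}\right)^{1/N}\right]dt \;\leq\; \frac sk .
\]

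Next I would locate the level $\tilde A$ at which the integrand drops to $\tfrac1{2k}$, defined by $(\tilde A/(1+\tilde A))^{1/N}=1-\tfrac1{2k}$, together with the abscissa $\tilde s$ satisfying $A(\tilde s)=\tilde A$. Because $x\mapsto (x/(1+x))^{1/N}$ is increasing, the hypothesis $k\geq\bigl(2[1-(B/(1+B))^{1/N}]\bigr)^{-1}$, i.e. $(B/(1+B))^{1/N}\leq 1-\tfrac1{2k}$ with $B=A(1)$, says exactly that $B\leq\tilde A$, hence $A(1)\leq A(\tilde s)$ and therefore $\tilde s\geq 1$. Splitting the defect at $\tilde s$ and using that the integrand is $\leq 1$ on $[0,\tilde s]$ and, by monotonicity of $A$, is $\leq \tfrac1{2k}$ on $[\tilde s,s]$, I obtain $D(s)\leq \tilde s + \tfrac{s}{2k}$, so that $D(s)\leq \tfrac sk$ holds as soon as $s\geq 2k\tilde s$. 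One may therefore take $S_k = 2k\tilde s$.

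It then remains to show that $\tilde s$ grows only polynomially in $k$. Solving the defining relation gives $\tilde A=c/(1-c)$ with $c=(1-\tfrac1{2k})^N$; since $(1-x)^N\leq 1-x$ for $x\in(0,1)$ and $N\geq 1$, one has $1-c\geq \tfrac1{2k}$ and hence the clean bound $\tilde A\leq 2k$. From $A(\tilde s)=\tilde A$ I read off $g(\tilde s)=\bigl(\tilde A/(2\al)^{N-1}\bigr)^{1/(N(2\al-1))}$, and because $\tilde s\geq 1$ property $(g_8)$ yields $g(\tilde s)\geq g(1)\,\tilde s^{1/(2\al)}$, so that $\tilde s\leq (g(\tilde s)/g(1))^{2\al}\leq C\,k^{2\al/(N(2\al-1))}$. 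Consequently $S_k=2k\tilde s\leq C'\,k^{1+2\al/(N(2\al-1))}$, which is the asserted polynomial growth.

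The main obstacle is this last step: converting the level-set condition $A(\tilde s)=\tilde A\leq 2k$ into a genuine upper bound on the abscissa $\tilde s$. The defect is easy to control once the split point is chosen, but bounding $\tilde s$ requires the polynomial lower bound $g(t)\geq g(1)t^{1/(2\al)}$ of $(g_8)$, which is only valid for $t\geq 1$; this is precisely why the hypothesis on $k$ is imposed, since it guarantees $\tilde s\geq 1$ and makes $(g_8)$ applicable at $\tilde s$. Everything else is elementary monotonicity together with the explicit formula for $v'$. I would additionally check the harmless points (that $2k\tilde s\geq \tilde s$, so the decomposition $[0,\tilde s]\cup[\tilde s,s]$ is legitimate for every $s\geq S_k$), but I expect no further difficulty.
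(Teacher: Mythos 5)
Your proof is correct and takes essentially the same route as the paper's: both compute $\frac{d}{ds}\bigl(g(s)^{2\al}/(2\al)^{1/N}\bigr)=\bigl(A(s)/(1+A(s))\bigr)^{1/N}$ with $A(s)=(2\al)^{N-1}(g(s))^{N(2\al-1)}$, locate an abscissa beyond which this derivative exceeds $1-\tfrac{1}{2k}$ (bounded polynomially in $k$ via $(g_8)$, which is exactly where the hypothesis on $k$ is needed to guarantee that abscissa is $\geq 1$), and then integrate to conclude for $s$ larger than about $2k$ times that abscissa. Your packaging of the integration as a defect bound $s-(2\al)^{-1/N}g(s)^{2\al}\le s/k$ split at the exact level-crossing point $\tilde s$, with $\tilde s$ bounded a posteriori, is only a mild reorganization of the paper's argument, which instead integrates from the explicit threshold $s_{\e/2}$ obtained by inserting the $(g_8)$ lower bound into $A$.
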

\begin{proof}
For $s \geq 1$, using $(g_8)$ we obtain
\begin{align*}
g^{\prime}(s) & =\frac{1}{\left(1+(2\al)^{N-1}(g(s))^{N(2\al-1)}\right)^{\frac{1}{N}}} \\
& = \left(\frac{(2\al)^{N-1}(g(s))^{N(2\al-1)}}{1+(2\al)^{N-1}(g(s))^{N(2\al-1)}} \right)^{\frac{1}{N}} \frac{1}{(2\al)^{\frac{N-1}{N}}(g(s))^{(2\al-1)}}\\
& \geq \left( \frac{(2\al)^{N-1} (g(1))^{(2\al -1 )N} s^{\frac{(2\al - 1)N}{2\al}}}{1 + (2\al)^{N-1} (g(1))^{(2\al -1 )N} s^{\frac{(2\al - 1)N}{2\al}}} \right)^{\frac{1}{N}} \frac{1}{(2\al)^{\frac{N-1}{N}}(g(s))^{(2\al-1)}} \\
& \geq \left(1 - \frac{\e}{2} \right)  \frac{1}{(2\al)^{\frac{N-1}{N}}(g(s))^{(2\al-1)}}, 
\end{align*}
where the last inequality holds for all $ s \geq \max \left\{ \left( \frac{(1 - \frac{\e}{2})^N}{(1 - (1 - \frac{\e}{2})^N )(2\al)^{N-1} (g(1))^{(2\al -1 )N}} \right)^{\frac{2 \al}{(2\al - 1) N}} ( = s_{\e/2} ), 1\right\}$ (comes from the direct computation). Also, note that $ s_{\e/2} \geq 1$ for $\e \leq 2 \left[ 1 - \left( \frac{B}{1+B} \right)^{\frac{1}{N}} \right] $. Thus, we obtain
\begin{align*}
    \frac{d}{ds} \left( \frac{g(s)^{2\al}}{(2\al)^{\frac{1}{N}}} \right) \geq \left(1 - \frac{\e}{2} \right). 
\end{align*}
Now, integrating $s_{\e/2}$ to s gives 
\begin{align*}
    \frac{g(s)^{2\al}}{(2 \al)^{\frac{1}{N}}} \geq ~ & \frac{g(s_{\e/2})^{2\al}}{(2 \al)^{\frac{1}{N}}} + \left(1 - \frac{\e}{2}\right) ( s- s_{\e/2})\\
\geq ~ & \left(1 - \frac{\e}{2}\right) ( s- s_{\e/2}) .   
\end{align*}
So, the right hand side is greater than $(1-\e)$ if and only if $ s \geq \left( \frac{2}{\e} - 1 \right) s_{\e/2} ~ ( = S_\e )$. Thus, if we take $\e = \frac{1}{k}$, then $S_\e$ has polynomial growth in $k$ as $s_\e $ has polynomial growth in $k$. This proves the claim. 
\end{proof}
\noindent After the transformation, the new energy functional $J_{\la}: W^{1,N}_{0}(\om)\rightarrow \real$ associated with $(P_{\la})$ is defined as:
\small{\begin{align}\label{new_energy}
J_{\lambda}(u)=& \frac{1}{N}\displaystyle\int_{\om }|\nabla u|^{N}dx -\frac{\lambda}{2 \alpha N} \int_{\om} \frac{|g(u)|^{2\alpha N}}{\left( |x| \log\left( \frac{R}{|x|} \right) \right)
^N } dx  -\frac 12\int_{\om}\int_{\om} \frac{H(y,g(u(y)))H(x,g(u(x)))}{|x-y|^\mu}dxdy. 
\end{align}}

\noindent It is evident that the functional is well-defined, $C^1(W_0^{1,N}(\om),\real)$, and its derivative is expressed as
\begin{align*}
\langle J_{\lambda}^{\prime}(u), v\rangle=& \displaystyle\int_{\om }|\nabla u|^{N-2} \na u  \na v \, dx - \lambda \int_{\om} \frac{|g(u)|^{2\alpha N-2} g(u) g^{\prime}(u) v}{\left( |x| \log\left( \frac{R}{|x|} \right) \right)^N}~dx  \notag \\ &- \int_{\om}\int_{\om} \frac{H(y,g(u(y)))h(x,g(u(x))) g^{\prime}(u(x))v(x)}{|x-y|^\mu}dxdy.
\end{align*}
As $u \in \sobo(\om)$, in view of Sobolev embedding and $(g_5)$, we have $ g(u) \in L^q(\om)$ for any $q \geq 1$. This together with $(h_4)$ and $(h_5)$ implies that $H(x,g(u(x))) \in L^q(\om)$ for any $q\geq 1$. Indeed,  
\begin{align}\label{exponential_term_bound}
|H(x,t)| \le \e |t|^{ \al N } + C(N,\e, r , \alpha ) |t|^r \exp\left((1+\e)|t|^{\frac{2\al N}{N-1}}\right),\;\; \text{for all}\; (x,t)\in \om \times \real.
\end{align}
for any $r \geq 1$ and some constant $C(N,\e,r, \al) > 0$. From this, one deduces that $|H(x,g(u(x)))| \in L^q(\om)$. Moreover, with help of (\ref{hls_inequality}), we obtain 
\begin{equation*}
    \int_{\om}\int_{\om} \frac{H(y,g(u(y)))H(x,g(u(x)))}{|x-y|^\mu}dxdy \leq C \lv H(., g(u(.))) \rv_{L^{\frac{2N}{2N - \mu}}}^2.
\end{equation*}
The critical points of the functional $J_{\lambda}$ are the weak solutions of the following problem:
\begin{equation*}\label{pq_new}
\tag{$ P_{\lambda}^*$}\left\{
\begin{array}{l}
 - \nlap u  - \lambda \frac{|g(u)|^{2\alpha N-2}g(u) g^{\prime}(u)}{\left( |x| \log\left(\frac{R}{|x|} \right) \right)^N} 
 = \left(\int_{\om} \frac{H(y,g(u(y)))}{|x-y|^{\mu}}dy\right) h(x,g(u(x))g^{\prime}(u(x)) \mbox{ in } \om, \\
 u > 0 \mbox{ in } \om \setminus \{ 0\}, \quad \quad u = 0 \mbox{ on } \partial \om.
\end{array}
\right.
\end{equation*}
As one can easily see that the problems (\ref{pq}) and (\ref{pq_new}) are equivalent. Moreover, $u$ is a weak solution of (\ref{pq_new}) if and only if $w = g(u)$ is a weak solution of (\ref{pq}).

\section{ Mountain Pass Geometry and Critical Level}
In this section, we will show that the functional $J_{\lambda}$ satisfies the mountain pass geometry and we also calculate the upper bound of the minimax level with the help of Moser functions. To prove the existence of a solution to the problem (\ref{pq}), we will employ the variant of the Mountain Pass Theorem \cite{rabinowitz1986minimax}. First, we show that the functional possesses the mountain pass geometry. 
\begin{lemma}\label{geometry_of_functional_at_infinity}
    For any $u \in \sobo (\om)$, we have $J_{\lambda}(tu) \to - \infty$ as $t \to \infty$.
\end{lemma}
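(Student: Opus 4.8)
The plan is to evaluate $J_{\la}$ along the ray $t\mapsto tu$ and track the growth of its three constituents as $t\to\infty$. The gradient term contributes exactly $\frac{t^N}{N}\|u\|^N$, a polynomial of degree $N$ in $t$. The Hardy term is nonnegative and enters with a minus sign, so it can only help; using $(g_6)$ one has $|g(tu)|^{2\al N}\le 2\al\,t^N|u|^N$, whence the critical Hardy inequality bounds this term in magnitude by $\frac{\la}{N C_N}t^N\|u\|^N$, again of order $t^N$. Thus the decisive term is the nonlocal Choquard term, which is likewise nonnegative and subtracted; the whole point is to show that it grows strictly faster than $t^N$. Since $H(x,s)=0$ for $s\le0$ by $(h_1)$ and $g$ is odd, this term is governed entirely by the region $\{u>0\}$, so I may and do fix $u$ with $u^+\not\equiv0$ (the case relevant for constructing the mountain-pass path).

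To produce a super-$t^N$ lower bound for the double integral, I would first integrate the Ambrosetti--Rabinowitz condition $(h_3)$ to get the polynomial lower bound $H(x,s)\ge H(x,s_0)(s/s_0)^{\theta_0}$ for $s\ge s_0>0$, where $\theta_0>\al N$, and observe that by continuity and positivity of $H(\cdot,s_0)$ on the compact set $\overline\om$ one has $c_H:=\inf_{\overline\om}H(\cdot,s_0)>0$. Next I would choose $\de>0$ so that $K:=\{x\in\om: u(x)\ge\de\}$ has positive measure (possible since $u^+\not\equiv0$). On $K$ we have $tu(x)\ge t\de\to\infty$, so for $t$ large $(g_8)$ yields $g(tu(x))\ge g(1)(tu(x))^{\frac{1}{2\al}}\ge g(1)(t\de)^{\frac{1}{2\al}}$, which exceeds $s_0$ uniformly in $x\in K$. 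Combining these estimates gives, for all large $t$,
\[
H(x,g(tu(x)))\ \ge\ c_1\,t^{\theta_0/(2\al)}\qquad\text{uniformly for }x\in K,
\]
with $c_1>0$ depending on $\de,s_0,g(1),c_H$.

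Finally I would restrict both integrations to $K\times K$ and use that $\om$ is bounded, so $|x-y|^{-\mu}\ge(\text{diam}\,\om)^{-\mu}=:d_0>0$ there; this gives
\[
\frac12\int_\om\int_\om\frac{H(y,g(tu))\,H(x,g(tu))}{|x-y|^\mu}\,dx\,dy\ \ge\ \frac{d_0}{2}\Big(\int_K H(x,g(tu))\,dx\Big)^2\ \ge\ c_2\,t^{\theta_0/\al}.
\]
Dropping the nonnegative Hardy term then yields $J_{\la}(tu)\le \frac{t^N}{N}\|u\|^N - c_2\, t^{\theta_0/\al}$. Because $(h_3)$ forces $\theta_0/\al>N$, the nonlocal term dominates both $t^N$ contributions and $J_{\la}(tu)\to-\infty$, as claimed.

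The main obstacle is precisely this competition of growth rates. The transformation $g$ sublinearizes, $g(s)\sim(2\al)^{1/(2N\al)}s^{1/(2\al)}$ by $(g_7)$, so each factor $H(x,g(tu))$ grows only like $t^{\theta_0/(2\al)}$ rather than like $t^{\theta_0}$; the double integral therefore grows like $t^{\theta_0/\al}$, and the entire argument hinges on checking that this exponent still beats the degree $N$ coming from \emph{both} the gradient norm and the Hardy term — which is guaranteed exactly by the condition $\theta_0>\al N$ in $(h_3)$. A secondary technical point is the measure-theoretic extraction of a set $K$ of positive measure on which $u$ is bounded below, together with the uniform positivity $c_H>0$ needed to launch the polynomial lower bound for $H$.
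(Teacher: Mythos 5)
Your proof is correct, and it reaches the conclusion by a genuinely different route than the paper. The paper works from $(h_2)$: integrating $H \le M_0 h$ gives the exponential lower bound $H(x,s)\ge C\exp(s/M_0)$ for large $s$, which is then truncated to a single power $\frac{C|g(tu)|^k}{M_0^k k!}-C_2$ with $k$ at one's disposal; choosing $k>\al N$ and expanding the full double integral over $\om\times\om$ produces a term of order $t^{k/\al}$, which beats $t^N$. You instead integrate the Ambrosetti--Rabinowitz condition $(h_3)$ to get the polynomial bound $H(x,s)\ge c_H(s/s_0)^{\theta_0}$, localize to a positive-measure level set $K=\{u\ge\de\}$, and bound the kernel below by $(\mathrm{diam}\,\om)^{-\mu}$, arriving at growth $t^{\theta_0/\al}$ with $\theta_0/\al>N$ precisely because $\theta_0>\al N$. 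The paper's route buys an arbitrarily high polynomial rate (the exponent $k$ is free), whereas yours is more economical: it isolates $(h_3)$ as the only structural hypothesis needed and makes the threshold $\theta_0>\al N$ transparent --- the change of variable $g$ divides the exponent by $2\al$ and the quadratic (double-integral) structure doubles it back. You are also more careful on one point the paper glosses over: as literally stated the lemma fails for $u\le 0$ with $u\not\equiv 0$, since then the Choquard term vanishes and $J_{\la}(tu)\ge \frac{t^N}{N}\left(1-\frac{\la}{C_N}\right)\|u\|^N\to+\infty$; correspondingly, the paper's pointwise bound $H(x,g(u))\ge C_1|u|^{k/(2\al)}-C_2$ is false on $\{u<0\}$. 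The restriction to $u^+\not\equiv 0$ that you impose is therefore genuinely necessary, and harmless, since the lemma is only ever invoked for non-negative $u$ (the Moser functions).
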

\begin{proof}
From the assumption $(h_2)$ of $H$, we deduce that there exists $C>0$ such that for all $s \geq \max\{R_0,g(1),1\}$ and $x \in \om$, we have $$H(x,s) \geq C \exp{\left(\frac{s}{M_0}\right)}.$$
Using this together with the property $(g_8)$ of Lemma \ref{L1}, we have $$H(x,g(u)) \geq \frac{C |g(u)|^k}{M_0^k k!} - C_2 \geq \frac{C (g(1))^k}{M_0^k k!} |u|^{\frac{k}{2\alpha}} - C_2 = C_1 |u|^{\frac{k}{2\alpha}} - C_2, $$ for any $u \in \sobo(\om), k \in \ntrl$ and constants $ C_1$ and $ C_2 > 0$ depends on $k$. So, we obtain
\begin{align*}
    \int_{\om} \left(\int_{\om} \right. & \left. \frac{H(y, g(tu
			))}{|x-y|^{\mu}}dy\right)H(x, g(t u))\,dx \\
   &  \geq \int_{\om} \int_{\om} \frac{(C_1 (t |u(y)|)^{\frac{k}{2\alpha}}-C_2)(C_1 (t |u(x)|))^{\frac{k}{2\alpha}}-C_2)}{|x-y|^{\mu}}\,dxdy\\
    & = C_1^2  \int_{\om} \int_{\om} \frac{(t|u(y)|)^{\frac{k}{2\alpha}} (t|u(x)|)^{\frac{k}{2\alpha}}}{|x-y|^{\mu}}~dxdy  -2C_1C_2\int_{\om} \int_{\om}\frac{(t|u(x)|)^{\frac{k}{2\alpha}}}{|x-y|^{\mu}}~dxdy + C_2^2 \int_{\om} \int_{\om} \frac{dx ~ dy}{|x-y|^{\mu}}\\
    & = C_1^2 t^{\frac{k}{\alpha}}  \int_{\om} \int_{\om} \frac{|u(y)|^{\frac{k}{2\alpha}} |u(x)|^{\frac{k}{2\alpha}}}{|x-y|^{\mu}}~dxdy -2C_1C_2 t^{\frac{k}{2\alpha}}\int_{\om} \int_{\om}\frac{|u(x)|^{\frac{k}{2\alpha}}}{|x-y|^{\mu}}~dxdy + C_2^2 \int_{\om} \int_{\om} \frac{dx ~ dy}{|x-y|^{\mu}}.
\end{align*}
Using this estimate, we deduce
\begin{align*}
    J_{\lambda}(tu) \leq & \frac{t^N}{N} \int_{\om} | \nabla u|^N - \frac{\lambda t^{2 \alpha N}}{2 \alpha N} \int_{\om}\frac{|g(u)|^{2 \alpha N}}{\left( |x| \log\left( \frac{R}{|x|} \right) \right)^N} dx - \frac{1}{2} \int_{\om} \left(\int_{\om} \frac{H(y, g(tu))}{|x-y|^{\mu}}dy\right)H(x, g(t u))~dx \\
    \leq & \frac{t^N}{N} \int_{\om} | \nabla u|^N - \frac{C_1^2t^{\frac{k}{\alpha}}}{2}   \int_{\om} \int_{\om} \frac{|u(y)|^{\frac{k}{2\alpha}} |u(x)|^{\frac{k}{2\alpha}}}{|x-y|^{\mu}}~dxdy + C_1C_2 t^{\frac{k}{2\alpha}}\int_{\om} \int_{\om}\frac{|u(x)|^{\frac{k}{2\alpha}}}{|x-y|^{\mu}}~dxdy \\
    & - \frac{C_2^2}{2} \int_{\om} \int_{\om} \frac{dx ~ dy}{|x-y|^{\mu}},
\end{align*}
choosing $k > N \alpha$, yields $J_{\lambda}(t u) \to - \infty$ as $t \to \infty$.
\end{proof}

\noindent Next, we show that the energy functional $J_{\lambda}$ possesses the mountain pass geometry near the origin.
\begin{lemma}\label{geometry_of_functional_near_zero}
Suppose $(h_4)$ and $(h_5)$ hold. Then there exists $\delta, \rho > 0$ such that $$ J_{\lambda}(u) \geq \delta \mbox{ for all } \lv u \rv  = \rho.$$
\end{lemma}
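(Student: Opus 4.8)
The goal is to show that the transformed functional $J_\lambda$ stays bounded below by a positive constant on a small sphere $\lv u \rv = \rho$ in $\sobo(\om)$. The plan is to estimate each of the three terms in \eqref{new_energy} separately: the gradient term contributes exactly $\frac{1}{N}\rho^N$, while the Hardy term and the Choquard nonlinearity must both be controlled by a quantity that is of higher order than $\rho^N$ as $\rho \to 0$, so that the positive leading term dominates for $\rho$ small.

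First I would handle the Hardy term. Using property $(g_6)$ of Lemma \ref{L1}, namely $|g(u)| \leq (2\al)^{\frac{1}{2N\al}} |u|^{\frac{1}{2\al}}$, we get $|g(u)|^{2\al N} \leq (2\al)^N |u|^N$, so the Hardy integrand is dominated by $(2\al)^N |u|^N / (|x| \log(R/|x|))^N$. The critical Hardy inequality quoted in the introduction then bounds this by $\frac{(2\al)^N}{C_N} \int_\om |\grad u|^N = \frac{(2\al)^N}{C_N} \rho^N$. Thus the Hardy term is itself of order $\rho^N$; its coefficient $\frac{\lambda}{2\al N}$ times $\frac{(2\al)^N}{C_N}$ must be strictly smaller than $\frac1N$ for the argument to close, which is exactly where the hypothesis $\lambda < C_N$ (and more precisely the stated range $\lambda < C_N(L-\tau)/L$) enters. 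This gives a net positive coefficient $\frac1N - \frac{\lambda (2\al)^{N-1}}{2 N C_N}$ on $\rho^N$.

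Next I would bound the Choquard term. By the Hardy--Littlewood--Sobolev estimate already recorded in the excerpt, this double integral is controlled by $C \lv H(\cdot, g(u(\cdot))) \rv_{L^{2N/(2N-\mu)}}^2$. To estimate that $L^p$ norm I would use the pointwise bound \eqref{exponential_term_bound} with $t = g(u)$: the first piece gives $\e |g(u)|^{\al N} \leq \e (2\al)^{N/2}|u|^{N/2}$ after applying $(g_6)$, and the exponential piece gives a term of the form $|g(u)|^r \exp((1+\e)|g(u)|^{\frac{2\al N}{N-1}})$. Here the crucial observation is that by $(g_6)$ again, $|g(u)|^{\frac{2\al N}{N-1}} \leq (2\al)^{\frac{1}{N-1}} |u|^{\frac{N}{N-1}}$, so the exponent is controlled by a Trudinger--Moser exponent in $u$; with $\lv u \rv = \rho$ small I can apply the Trudinger--Moser inequality \eqref{TM-ineq} to keep $\int_\om \exp(\beta |u|^{\frac{N}{N-1}})$ uniformly bounded provided $(1+\e)(2\al)^{\frac{1}{N-1}} \rho^{\frac{N}{N-1}} \cdot \frac{2N}{2N-\mu} \leq \al_N$, which holds for $\rho$ sufficiently small. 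Combining via Hölder to separate the polynomial factor $|u|^r$ from the exponential, the $L^{2N/(2N-\mu)}$ norm of $H(\cdot,g(u))$ comes out bounded by $C\rho^{N/2}$ plus a higher-order term, so its square is bounded by $C(\rho^N + \text{h.o.t.})$, but with the freedom in $\e$ and $r$ I can arrange the dominant contribution of the Choquard term to be of order $\rho^{2 \cdot \frac{N}{2}} = \rho^N$ with a small coefficient, or genuinely higher order $\rho^{N+\sigma}$.

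Putting the three estimates together yields $J_\lambda(u) \geq \left(\frac1N - \frac{\lambda(2\al)^{N-1}}{2NC_N} - C\e\right)\rho^N - C\rho^{N+\sigma}$ for some $\sigma > 0$, and for $\e$ and then $\rho$ chosen small enough the bracketed coefficient is a positive constant while the subtracted term is negligible; hence there exist $\delta, \rho > 0$ with $J_\lambda(u) \geq \delta$ whenever $\lv u \rv = \rho$. The main obstacle I anticipate is the careful bookkeeping in the Choquard estimate: I must choose $\e$ (from \eqref{exponential_term_bound}), the Hölder conjugate exponents, and $\rho$ in the correct order so that the Trudinger--Moser threshold is respected and so that the residual exponential contribution is genuinely of higher order in $\rho$ than the leading $\rho^N$ term, rather than competing with it. The subordinate subtlety is confirming that the combined coefficient of $\rho^N$ stays strictly positive, which is guaranteed precisely by the restriction on $\lambda$ built into the theorem.
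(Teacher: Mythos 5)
Your proposal follows essentially the same route as the paper: absorb the Hardy term into the gradient term via $(g_6)$ and the critical Hardy inequality, and control the Choquard term through the Hardy--Littlewood--Sobolev inequality, the growth bound \eqref{exponential_term_bound}, $(g_6)$ to reduce the exponent to a Trudinger--Moser one, H\"{o}lder to split off the polynomial factor, and smallness of $\rho$ to stay under the Trudinger--Moser threshold. One constant needs correcting: $(g_6)$ gives $|g(u)|^{2\al N}\leq (2\al)^{\frac{2\al N}{2N\al}}|u|^{N}=2\al\,|u|^{N}$, not $(2\al)^{N}|u|^{N}$, so the Hardy term is bounded by $\frac{\lambda}{NC_N}\lv u\rv^{N}$ and the leading coefficient is exactly $\frac{1}{N}\bigl(1-\frac{\lambda}{C_N}\bigr)$, positive for every $\lambda<C_N$ with no additional restriction on $\lambda$ (your version would impose a spurious $\al$-dependent smallness condition).
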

\begin{proof}
From \eqref{exponential_term_bound}, $(g_5)$, Minkowski inequality, H\"{o}lder inequality and Sobolev inequality, we have
\begin{align*}
    &\left( \int_{\om} |H(x,g(u))|^{\frac{2N}{2N - \mu}} \right)^{\frac{2N - \mu}{N}} \notag \\
    & \leq \left[ \int_{\om} \left\{ \e |u|^{\al N} + C(N,\e, r, \al) |u|^r \exp\left((1+\e)|g(u)|^{\frac{2\al N}{N-1}}\right) \right\}^{\frac{2N}{2N-\mu}} \right]^{\frac{2N - \mu}{N}} \notag \\
    & \leq \left[ \left( \int_{\om} \left( \e |u|^{\al N} \right)^{\frac{2N}{2N-\mu}} \right)^{\frac{2N - \mu}{2N}}+ C_2 \left\{ \int_{\om} \left( |u|^r \exp\left((1+\e) (2\alpha)^{\frac{1}{N-1}} |u|^{\frac{N}{N-1}} \right) \right)^{\frac{2N}{2N - \mu}} \right\}^{\frac{2N - \mu}{2N}} \right]^2 \notag \\
    & \leq  2 \left[ \int_{\om} \left( \e |u|^{\al N} \right)^{\frac{2N}{2N-\mu}} \right]^{\frac{2N - \mu}{N}}+ C_2 \left[ \int_{\om} \left( |u|^r \exp\left((1+\e) (2\alpha)^{\frac{1}{N-1}} |u|^{\frac{N}{N-1}} \right) \right)^{\frac{2N }{2N- \mu}} \right]^{\frac{2N - \mu}{N}}  \notag \\
    & \leq 2 \e^2 \left[ \int_{\om} |u|^{\frac{2 \al N^2}{2N-\mu}} \right]^{\frac{2N-\mu}{N}} + C_2\left( \int_{\om} |u|^{\frac{4rN}{2N -\mu}} \right)^{\frac{2N-\mu}{2N}} \left[ \int_{\om} \left\{ \exp\left((1+\e)  (2\alpha)^{\frac{1}{N-1}} |u|^{\frac{N}{N-1}} \right) \right\}^{\frac{4N}{2N -\mu}} \right]^{\frac{2N - \mu}{2N}} \\
    & \leq C_1 \lv u \rv^{2 \al N} +  C_2  \lv u\rv^{2r}_{L^{\frac{4rN}{2N -\mu}}}  \left[ \int_{\om} \exp\left\{(1+\e)  (2\alpha)^{\frac{1}{N-1}} \lv u\rv^{\frac{N}{N-1}} \frac{4N}{2N -\mu}  \left( \frac{|u|^{\frac{N}{N-1}}}{ \lv u\rv^{\frac{N}{N-1}}} \right) \right\} \right]^{\frac{2N - \mu}{2N}}.
\end{align*}
where $C_1$ and $C_2$ are constants depending on $N, \e, r,$ and $ \al$.

Now, if we choose $\lv u \rv$ is small enough such that $$ (1+\e)  (2\alpha)^{\frac{1}{N-1}} \lv u\rv^{\frac{N}{N-1}} \frac{4N}{2N -\mu} < \al_N .$$
Then the Trudinger-Moser inequality implies 
\begin{equation}\label{origin_eqn}
    \left( \int_{\om} (H(x,g(u)))^{\frac{2N}{2N - \mu}} \right)^{\frac{2N - \mu}{N}} \leq C_1 \lv u \rv ^{2 \al N} + C_2 \lv u \rv ^{2r}. 
\end{equation}
Applying $(g_6), (g_{10})$, Hardy inequality, and (\ref{origin_eqn}), we deduce $$ J_{\lambda}(u) \geq \frac{1}{N}\left( 1 - \frac{\lambda}{C_N}\right) \lv u \rv ^N -  C_1 \lv u \rv ^{2 \al N} - C_2 \lv u \rv ^{2r}.$$
Choosing $2r > N$, we conclude that there exists $\rho>0$ such that $J_{\lambda}(u) \geq \delta$ for all $\lv u \rv  = \rho$.
\end{proof}


\noindent Now, using Moser functions, we show that the minimax value is less than a specific value. Moreover, as we will see later, that specific value turns out to be the first threshold level below which the Palais-Smale sequences possess the convergent subsequences. First of all, let us define the minimax level $\beta^*$. Let $$ \Gamma_{\lambda} = \{ \gamma \in C([0,1],\sobo(\om): \gamma(0) = 0, J_{\lambda}(\gamma(1)) < 0 \}$$ be the set of all paths, then the minimax level is defined as $$\beta^*(\lambda) = \inf_{\gamma \in \Gamma} \max_{t \in [0,1]} J_{\lambda}(\gamma(t)).$$ Note that, $\beta^*(\lambda)$ is decreasing with respect to $\lambda$. Consequently, as $\la \to 0$, we have $\beta^*(\lambda) \to \beta^*(0)=\tau$, where $\tau$ is defined as in \eqref{minimax_level_at_0}.
\begin{lemma}
If $(h6)$ holds, then for all $\lambda \in [0, C_N)$, we have  $$ 0 < \beta^*(\lambda) < \frac{1}{2\al N}  \left( \frac{2N - \mu}{2N} \al_N \right)^{N-1},$$where $\al_N = N \omega_{N-1}^{1/{N-1}}$ and $\omega_{N-1}$ denotes the $N-1$ dimensional measure of surface measure of unit ball in $\rnn$.
\end{lemma}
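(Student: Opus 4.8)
The proof separates into the lower bound $\beta^*(\la)>0$ and the upper bound $\beta^*(\la)<L$, and I would treat them by quite different means. For the lower bound I would lean directly on the mountain pass geometry already established: by Lemma \ref{geometry_of_functional_near_zero} there are $\delta,\rho>0$ with $J_\la(u)\geq\delta$ whenever $\lv u\rv=\rho$, and the estimate in that proof (namely $J_\la(u)\geq\frac1N(1-\la/C_N)\lv u\rv^N-C_1\lv u\rv^{2\al N}-C_2\lv u\rv^{2r}$) shows $J_\la\geq 0$ on the whole ball $\{\lv u\rv\leq\rho\}$ after shrinking $\rho$. Since every $\gamma\in\Gamma_\la$ satisfies $\gamma(0)=0$ and $J_\la(\gamma(1))<0$, its endpoint must lie outside that ball, so by continuity $\lv\gamma(t_0)\rv=\rho$ for some $t_0$, whence $\max_{t}J_\la(\gamma(t))\geq\delta$; taking the infimum over $\gamma$ gives $\beta^*(\la)\geq\delta>0$.

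For the upper bound I would use the Moser sequence $M_n$, normalized so that $\lv M_n\rv=1$ and concentrated on $B_{R/n}$, where it takes the value $c_n=\omega_{N-1}^{-1/N}(\log n)^{(N-1)/N}$. Because $J_\la(tM_n)\to-\infty$ as $t\to\infty$ by Lemma \ref{geometry_of_functional_at_infinity}, the ray $t\mapsto tM_n$ attains its maximum at some finite $t_n>0$ and, suitably reparametrized, lies in $\Gamma_\la$; hence $\beta^*(\la)\leq\max_{t\geq0}J_\la(tM_n)$, and it suffices to show this maximum is $<L$ for some $n$. I would argue by contradiction, assuming $\max_{t}J_\la(tM_n)\geq L$ for every $n$. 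From $J_\la(t_nM_n)\geq L$, discarding the two nonnegative subtracted terms, I get $t_n^N/N\geq L$, i.e. $t_n^N\geq NL=\frac{1}{2\al}\big(\frac{2N-\mu}{2N}\al_N\big)^{N-1}$. From the criticality identity $\langle J_\la'(t_nM_n),t_nM_n\rangle=0$, using $\lv M_n\rv=1$, dropping the nonnegative Hardy contribution, and applying $(g_4)$ in the form $tg'(t)\geq g(t)/2\al$, I obtain
\[
t_n^N\;\geq\;\frac{1}{2\al}\int_{\om}\int_{\om}\frac{H(y,g(t_nM_n))\,h(x,g(t_nM_n))\,g(t_nM_n)}{|x-y|^\mu}\,dx\,dy.
\]

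The heart of the argument is then the concentration estimate on $B_{R/n}\times B_{R/n}$, where $g(t_nM_n)=g(t_nc_n)\to\infty$. There I would invoke $(h_6)$ to bound the integrand below by $K\exp\big(2|g(t_nc_n)|^{2\al N/(N-1)}\big)$ for any prescribed constant $K$ and all large $n$, use the scaling $\int_{B_{R/n}}\int_{B_{R/n}}|x-y|^{-\mu}\sim C n^{-(2N-\mu)}$, and sharpen the exponent via Proposition \ref{limit_polynomial_growth} together with the identity $c_n^{N/(N-1)}=\frac{N}{\al_N}\log n$ to write $2|g(t_nc_n)|^{2\al N/(N-1)}\geq (1-o(1))\frac{2N(2\al)^{1/(N-1)}}{\al_N}\,t_n^{N/(N-1)}\log n$. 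The resulting net power of $n$ is $\frac{2N(2\al)^{1/(N-1)}}{\al_N}t_n^{N/(N-1)}-(2N-\mu)$, and a direct computation shows the finiteness of the right-hand side forces $t_n^N\leq NL$, which combined with $t_n^N\geq NL$ pins $t_n^N\to NL$ and drives this power to $0$. At that exact balance the polynomial factors cancel, but the constant $K$ from $(h_6)$ is arbitrary, so letting $K\to\infty$ contradicts $t_n^N\to NL<\infty$. The $\la$-dependence is harmless throughout, since the Hardy term is subtracted and only lowers the level (the worst case being $\la=0$), which is why the bound holds uniformly for $\la\in[0,C_N)$.

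The main obstacle is precisely this final blow-up analysis: one must justify rigorously that $t_n$ remains bounded and $t_n^N\to NL$, must replace the merely asymptotic relation $(g_7)$ by the quantitative lower bound of Proposition \ref{limit_polynomial_growth} so that the error terms do not disturb the exact cancellation of the power of $n$, and must exploit the superexponential hypothesis $(h_6)$ to defeat the borderline case. Everything else is routine estimation.
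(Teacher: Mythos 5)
Your proposal is correct and follows essentially the same route as the paper: the same normalized Moser sequence, the same contradiction scheme in which $t_n^N\geq NL$ is forced from the level and $t_n^N\leq NL$ from the exponent-of-$n$ balance in the concentration estimate, the same use of $(g_4)$, $(h_6)$ with an arbitrary constant, the lower bound $\int_{B(0,\rho/k)}\int_{B(0,\rho/k)}|x-y|^{-\mu}\,dx\,dy\gtrsim (\rho/k)^{2N-\mu}$, and Proposition \ref{limit_polynomial_growth} to make the $(g_7)$ asymptotics quantitative at the borderline. The only (harmless) difference is that you spell out the positivity $\beta^*(\lambda)>0$ via Lemma \ref{geometry_of_functional_near_zero}, which the paper leaves implicit.
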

\begin{proof}
For any non-trivial non-negative $u \in \sobo(\om)$, we know from Lemma \ref{geometry_of_functional_at_infinity} that $J_{\lambda}(tu) \to -\infty$ as $t \to \infty$. Thus $$ \beta^*(\lambda) \leq \max_{t \in [0,1]} J_{\lambda}(\gamma(t))$$ for all $u \in \sobo(\om) \setminus \{ 0 \}$ with $J_{\lambda}(u) < 0$. \\
So, it is enough to show that there exists $u_0 \in \sobo(\om)$ such that $\lv u_0 \rv = 1$ and 
\begin{equation}\label{less_than_level}
\max_{t \in [0,\infty)} J_{\lambda}(t u_0) < \frac{1}{2 \alpha N} \left( \frac{2N - \mu}{2N} \al_N \right)^{N-1}.    
\end{equation}
Consider the Moser functions defined as 
\begin{equation*}
w_k(x)=\frac{1}{\omega_{N-1}^{\frac{1}{N}}}\left\{
\begin{split}
& (\log k)^{\frac{N-1}{N}},\; 0\leq |x|\leq \frac{\rho}{k},\\
& \frac{\log \left(\frac{\rho}{|x|}\right)}{(\log k)^{\frac{1}{N}}}, \; \frac{\rho}{k}\leq |x|\leq \rho\\
& 0,\; |x|\geq \rho.
\end{split}
\right.
\end{equation*}
So, $Supp(w_k) \subset B(0,\rho)$ and $\lv w_k \rv = 1$ for all $k \in \mathbb{N}$. We claim that the (\ref{less_than_level}) holds for some $w_k$. If not, then for all $k \in \mathbb{N}$, there exists $t_k > 0$ such that 
\begin{equation}\label{moser_lower_bound}
    \max_{t \in [0,\infty)} J_{\lambda}(t w_k) = J_{\lambda}(t_k w_k) \geq \frac{1}{2\al N} \left( \frac{2N - \mu}{2N} \al_N \right)^{N-1}
\end{equation}
and 
\begin{equation}\label{derivative_zero_at_maximum_of_tk}
  \left.  \frac{d}{dt} ( J_{\lambda}(t w_k)) \right|_{t = t_k} = 0.
\end{equation}
As $J_{\lambda}(tw_k) \to - \infty$ as $t \to \infty$ uniformly in $k$. Then, in view of (\ref{moser_lower_bound}), $\{ t_k\}$ is a bounded sequence. \\
As $H(x,s)$ is non negative, we have $$ J_{\lambda}(t_k w_k) \leq \frac{1}{N}  \int_{\om} | \grad (t_k w_k)|^N = \frac{1}{N}  t_k^{N}. $$
Thus, we have $$ \frac{1}{N} t_k^{N} \geq \frac{1}{2\al N}  \left( \frac{2N - \mu}{2N} \al_N \right)^{N-1}, $$
which implies 
\begin{equation*}
    t_k^N \geq \frac{1}{2\al } \left( \frac{2N - \mu}{2N} \al_N \right)^{N-1}.
\end{equation*}
Thus, $\{t_k\}$ is bounded below by a positive number. \\
Also, from this, we deduce that in the ball $B(0,\rho/k)$, we have 
\begin{equation}\label{moser_functions_inside_ball_behaviour}
    t_k w_k(x) = \frac{t_k (\log k)^{(N-1)/N}}{w_{N-1}^{1/N}} \to \infty  \mbox{ as } k \to \infty.
\end{equation} 
We claim that $t_k ^N \to \frac{1}{2\al } \left( \frac{2N - \mu}{2N} \al_N \right)^{N-1}$ as $k \to \infty$. If not, then there exists a small $\delta > 0$ and $n_{\delta} \in \mathbb{N}$ such that
\begin{equation}\label{minimax_level_tk_not_limit}
     t_k^{\frac{N}{N-1}} \geq \frac{1}{(2\al)^{1/(N-1)} } \left( \frac{2N - \mu}{2N} \al_N \right)(1+\delta) \mbox{ for all } k \geq n_{\delta}.
\end{equation}
Now, \eqref{derivative_zero_at_maximum_of_tk} and $(g_4)$ implies that 

\begin{align*}
t_k^N = & \lambda \int_{\om} \frac{|g(t_k w_k)|^{2\al N-2} g(t_k w_k) g^{\prime}(t_k w_k) t_k w_k}{\left( |x| \log\left( \frac{R}{|x|} \right) \right)^N} \\ 
& + \int_{\om}\int_{\om} \frac{H(y,g(t_k w_k(y)))h(x,g(t_k w_k(x))) g^{\prime}(t_k w_k(y)) t_k w_k(x)}{|x-y|^\mu}dxdy \\    
\geq & \frac{\lambda}{2 \al } \int_{\om} \frac{|g(t_k w_k)|^{2\al N} }{\left( |x| \log\left( \frac{R}{|x|} \right) \right)^N} \\
& + \int_{\om}\int_{\om} \frac{H(y,g(t_k w_k(y)))h(x,g(t_k w_k(x))) g^{\prime}(t_k w_k(y)) t_k w_k(x)}{|x-y|^\mu}dxdy \\
\geq & \frac{1}{2 \al} \int_{B(0, \rho/k)}\int_{B(0, \rho/k)} \frac{H(y,g(t_k w_k(y)))h(x,g(t_k w_k(x))) g(t_k w_k(y)) }{|x-y|^\mu}dxdy \\
\end{align*}
which is independent of $\lambda$. \\
The assumption $(h_6)$ yield that for each $d > 0$ there exists $s_d$ such that 
\begin{equation*}
    s h(x,s) H(y,s) \geq d \exp\left( 2 |s|^{\frac{2 \al N }{N-1}}\right) \mbox{ whenever } s \geq s_d.
\end{equation*}
Also, (\ref{moser_functions_inside_ball_behaviour}) and $(g_8)$ implies that we can choose $r_d \in \mathbb{N}$ such that for all $ k \geq r_d$, $$ g \left( t_k w_k \right) \geq g(1) (t_k w_k)^{1/(2\al)} \geq s_d$$ uniformly in $B(0,\rho/k)$. Hence, for all $k \geq r_d$, we deduce
\begin{equation}\label{minimax_level_eq1}
    H(y,g(t_k w_k(y)))h(x,g(t_k w_k(x))) g(t_k w_k(y)) \geq d \exp \left( 2 |g(t_k w_k)|^{\frac{2 \al N }{N-1}} \right). 
\end{equation}
Now, from \cite{alves2016existence}, we obtain 
\begin{equation}\label{minimax_level_eq2}
    \int_{B(0, \rho/k)}\int_{B(0, \rho/k)} \frac{~dxdy}{|x-y|^{\mu}} \geq C_{\mu, N} \left(\frac{\rho}{k}\right)^{2N-\mu},
\end{equation}
where $C_{\mu, N}$ is a positive constant depending on $\mu$ and $N$. \\
Moreover, by $(g_7)$, for any $\e > 0$ there exists $k_0(\e) \in \ntrl$ such that for all $k \geq k_0(\e)$, we have 
\begin{equation}\label{minimax_level_eq3}
|g(t_k w_k) |^{\frac{2\al N }{N-1}} \geq ((2\al)^{1/(N-1)} - \e) (t_k w_k)^{\frac{N}{N-1}}.
\end{equation}
Thus, using \eqref{minimax_level_eq1}, \eqref{minimax_level_eq2}, \eqref{minimax_level_eq3} and \eqref{minimax_level_tk_not_limit} we have for $k \geq \max\{ n_\delta, r_d, k_0(\e)\}$
\begin{align*}
    t_k^N & \geq \frac{d}{2\al}   \exp\left( 2 \left\{ g \left( \frac{t_k (\log k)^{\frac{N-1}{N}}}{\omega_{N-1}^{1/N}} \right)\right\}^{\frac{2 \al N }{N-1}}\right) C_{\mu,N} \left(\frac{\rho}{k}\right)^{2N-\mu} \\
    & \geq \frac{d}{2\al}   \exp\left( 2 ( (2 \al)^{\frac{1}{N-1}} - \e) \left( \frac{t_k (\log k )^{\frac{N-1}{N}}}{\omega_{N-1}^{1/N}} \right)^{\frac{N}{N-1}}\right) C_{\mu,N} \left(\frac{\rho}{k}\right)^{2N-\mu} \\
    & \geq \frac{d}{2\al} C_{\mu,N} \left(\frac{\rho}{k}\right)^{2N-\mu} \exp\left( 2 ( (2 \al)^{\frac{1}{N-1}} - \e) \frac{\log k}{\omega_{N-1}^{\frac{1}{N-1}}} t_k^{\frac{N}{N-1}}\right)  \\
   & \geq \frac{d}{2\al} C_{\mu,N} \left(\frac{\rho}{k}\right)^{2N-\mu} \exp\left( 2 ( (2 \al)^{\frac{1}{N-1}} - \e) \frac{\log k}{\omega_{N-1}^{\frac{1}{N-1}}} \frac{1}{(2\al)^{1/(N-1)} } \left( \frac{2N - \mu}{2N} \al_N \right)(1+\delta) \right)  \\
   & \geq \frac{d}{2\al} C_{\mu,N} \left(\frac{\rho}{k}\right)^{2N-\mu} \exp\left(  \frac{(1+\delta)( (2 \al)^{\frac{1}{N-1}} - \e)}{(2\al)^{\frac{1}{N-1}}} (2N - \mu) \log k  \right).
\end{align*}
Choosing $\e > 0$ such that $\frac{(1+\delta)\left( (2 \al)^{\frac{1}{N-1}} - \e\right)}{(2\al)^{\frac{1}{N-1}}} > 1 + \gamma$ for some $\gamma > 0$ , we obtain
\begin{align*}
    t_k^N &\geq \frac{d}{2\al} C_{\mu,N} \left(\frac{\rho}{k}\right)^{2N-\mu} \exp( (1+\gamma)(2N - \mu) \log k) \\
t_k^N & \geq \frac{d}{2\al} C_{\mu,N} \rho^{2N - \mu} k^{\gamma ( 2N - \mu)},
\end{align*}
which is a contradiction to the fact that $\{t_k\}$ is bounded as right hand side tends to infinity as $k \to \infty$. Therefore, $t_k  \to \frac{1}{(2\al)^{\frac{1}{N}} } \left( \frac{2N - \mu}{2N} \al_N \right)^{\frac{N-1}{N}}$.\\
Now, using the lower bound of $t_k$, we obtain that 
\begin{align*}
    t_k^N & \geq d C_{\mu,N} \left(\frac{\rho}{k}\right)^{2N-\mu} \exp\left( 2 ( (2 \al)^{\frac{1}{N-1}} - \e) \frac{\log k}{\omega_{N-1}^{\frac{1}{N-1}}} t_k^{\frac{N}{N-1}}\right)  \\
& \geq d C_{\mu,N} \left(\frac{\rho}{k}\right)^{2N-\mu} \exp\left(  \frac{( (2 \al)^{\frac{1}{N-1}} - \e)}{(2\al)^{\frac{1}{N-1}}} (2N - \mu) \log k  \right)  \\   
& \geq d C_{\mu,N} \rho^{2N - \mu} \exp\left( \left( \frac{ (2 \al)^{\frac{1}{N-1}} - \e}{(2\al)^{\frac{1}{N-1}}} - 1\right) (2N - \mu) \log k  \right)  \\   
& \geq d C_{\mu,N} \rho^{2N - \mu} \exp\left( \frac{-\e}{(2\al)^{\frac{1}{N-1}}}  (2N - \mu) \log k  \right).  \\   
\end{align*}
So, from the Proposition \ref{limit_polynomial_growth}, if we take $\e = \frac{1}{m}$, then $k_m$ has polynomial growth in $m$. Therefore we have 
$$t_{k_m}^N \geq d C_{\mu,N} \rho^{2N - \mu} \exp\left( -\frac{(2N - \mu)\log k_m}{m(2\al)^{\frac{1}{N-1}}}  \right) $$
and letting $m \to \infty$, we get that $$ \frac{1}{2\al } \left( \frac{2N - \mu}{2N} \al_N \right)^{N-1} \geq d C_{\mu, N} \rho^{2N - \mu}.$$ But, we can choose $d$ as large as possible, we have a contradiction. Thus, there exists some $k$ such that (\ref{moser_lower_bound}) holds. This proves our assertion.
\end{proof}

\section{Behaviour of Palais-Smale Sequences}
This section examines the behavior of Palais-Smale sequences and shows that these sequences fulfill the Palais-Smale condition below a certain threshold level.\\
Initially, we prove that every Palais-Smale sequence is bounded. This proposition affirms this fact.
\begin{lemma}\label{ps_bounded}
    If $\{ u_k \}$ is a Palais-Smale sequence for the functional (\ref{new_energy}). Then $\{ u_k \}$ is bounded in $\sobo (\om)$.
\end{lemma}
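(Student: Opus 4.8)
The plan is to run an Ambrosetti--Rabinowitz argument adapted to the transformation $g$, using hypothesis $(h_3)$ to dominate the nonlocal term and the critical Hardy inequality to swallow the singular term. Let $\{u_k\}$ be a Palais--Smale sequence, so that $J_\la(u_k)\to c$ for some $c\in\real$ and $\lv J_\la'(u_k)\rv_*=:\e_k\to 0$; in particular $|\langle J_\la'(u_k),u_k\rangle|\le \e_k\lv u_k\rv$. First I would test $J_\la'(u_k)$ against $u_k$ itself and consider the combination $J_\la(u_k)-\frac{\al}{\theta_0}\langle J_\la'(u_k),u_k\rangle$, the weight $\al/\theta_0$ being tuned so that the Choquard part comes out nonnegative. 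Abbreviating
\[
A(u)=\int_\om \frac{|g(u)|^{2\al N}}{\left(|x|\log(R/|x|)\right)^N}\,dx,\qquad
B(u)=\int_\om\int_\om \frac{H(y,g(u(y)))H(x,g(u(x)))}{|x-y|^\mu}\,dx\,dy,
\]
and writing $\tilde A(u),\tilde B(u)$ for the corresponding expressions occurring in $\langle J_\la'(u),u\rangle$, this combination equals
\[
\left(\tfrac1N-\tfrac{\al}{\theta_0}\right)\lv u_k\rv^N
+\la\left(\tfrac{\al}{\theta_0}\tilde A(u_k)-\tfrac{1}{2\al N}A(u_k)\right)
+\left(\tfrac{\al}{\theta_0}\tilde B(u_k)-\tfrac12 B(u_k)\right).
\]

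The next step is to bound the last two brackets pointwise. From $(g_4)$ one has $t g'(t)\ge \tfrac{1}{2\al}g(t)$ for $t>0$ (and the matching statement for $t<0$ by oddness of the relevant integrands). Combining this with $h\ge 0$ and $(h_3)$ gives $h(x,g(u))g'(u)u\ge \tfrac{1}{2\al}h(x,g(u))g(u)\ge \tfrac{\theta_0}{2\al}H(x,g(u))$, hence $\tilde B(u)\ge \tfrac{\theta_0}{2\al}B(u)$ and therefore $\tfrac{\al}{\theta_0}\tilde B-\tfrac12 B\ge 0$: the Choquard contribution is killed. The same inequality $t g'(t)\ge\tfrac{1}{2\al}g(t)$ yields $\tilde A(u)\ge \tfrac{1}{2\al}A(u)$, so the Hardy bracket is at least $\tfrac12\left(\tfrac{1}{\theta_0}-\tfrac{1}{\al N}\right)A(u)$, which is negative because $\theta_0>\al N$.

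To neutralise this negative contribution I would invoke $(g_6)$, which gives the pointwise estimate $|g(u)|^{2\al N}\le 2\al|u|^N$, followed by the critical Hardy inequality; together they yield $A(u)\le \tfrac{2\al}{C_N}\lv u\rv^N$. Substituting everything back, the combination is bounded below by $\tfrac{\theta_0-\al N}{N\theta_0}\left(1-\tfrac{\la}{C_N}\right)\lv u_k\rv^N$, whose coefficient is strictly positive exactly because $\theta_0>\al N$ and $\la<C_N$. On the other hand the same quantity is at most $|c|+1+\tfrac{\al}{\theta_0}\e_k\lv u_k\rv$ for $k$ large. Since $N\ge 2>1$, the term $\lv u_k\rv^N$ outgrows $\e_k\lv u_k\rv$, so if $\lv u_k\rv$ were unbounded, dividing by $\lv u_k\rv^N$ along a divergent subsequence would force $\tfrac{\theta_0-\al N}{N\theta_0}(1-\la/C_N)\le 0$, a contradiction. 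Hence $\{u_k\}$ is bounded in $\sobo(\om)$.

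The one genuinely delicate step is the Hardy term: after the Ambrosetti--Rabinowitz subtraction it is not sign-definite, and the whole argument rests on absorbing it into the gradient energy via the sharp constant $C_N$ of the critical Hardy inequality --- which is precisely where the restriction $\la<C_N$ is consumed. By contrast, the quasilinear Choquard term is disposed of cleanly by the single superquadraticity condition $(h_3)$ together with $(g_4)$, while the conversion $|g(u)|^{2\al N}\le 2\al|u|^N$ supplied by $(g_6)$ is what makes the nonlinear singular term amenable to the linear Hardy inequality.
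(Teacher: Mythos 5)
Your proof is correct and follows essentially the same Ambrosetti--Rabinowitz strategy as the paper: an affine combination of $J_{\la}(u_k)$ with the derivative pairing, where $(h_3)$ makes the Choquard bracket nonnegative, $(g_4)$ relates $tg'(t)$ to $g(t)$, and $(g_6)$ together with the critical Hardy inequality absorbs the singular term under $\la<C_N$. The only difference is that the paper tests $J_{\la}'(u_k)$ against $v_k=g(u_k)/g'(u_k)$ (after checking $v_k\in\sobo(\om)$ and computing $\nabla v_k$), which turns the Hardy and Choquard contributions into exact expressions in $g(u_k)$, whereas you test against $u_k$ and compensate with the pointwise inequality $tg'(t)\ge\frac{1}{2\al}g(t)$; both routes give the same coercive lower bound up to constants.
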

\begin{proof}
Let $\{ u_k \}$ be a Palais-Smale sequence associated with the functional $J_{\lambda}$. Then $J_{\lambda}(u_k) \to c$ for some $c \in \real $ and $J^{\prime}_{\lambda}(u_k) \to 0 $. Thus, we have 
\begin{equation}\label{ps_functional_derivative_0}
\frac 1N\|u_k\|^N - \frac{\lambda}{2 \alpha N}\int_{\om} \frac{|g(u_k)|^{2 \alpha N} ~ dx}{\left( |x| \log \left(\frac{R}{|x|} \right) \right)^N} - \frac12 \int_{\om} \left(\int_{\om} \frac{H(y,g(u_k))}{|x-y|^{\mu}}dy \right)H(x,g (u_k))~dx \to c
\end{equation}
as $k \to \infty$, and 
\begin{align}\label{ps_functional_derivative}
&\left| \int_{\om}{|\nabla u_k(x)|^{N-2}\nabla u_k \nabla v }{dx} - \lambda \int_{\om} \frac{|g(u_k)|^{2 ( \alpha N - 1)} g(u_k) g^{\prime}(u_k) v ~ dx}{\left( |x| \log \left(\frac{R}{|x|} \right) \right)^N} \right. \notag \\
&\left. -\int_\om \left(\int_\om \frac{H(y,g(u_k))}{|x-y|^{\mu}}dy \right)h(x,g(u_k)) g^{\prime}(u_k)v ~dx \right|\leq \e_k\|v\|.
\end{align}
Next, we take $v_k = \frac{g(u_k)}{g^{\prime}(u_k)}$ as a test function in (\ref{ps_functional_derivative}), but we need to show that this belongs to the $\sobo(\om)$. Using $(g_4)$, we get $|v_k| \leq 2 \al |u_k|$ and from direct calculations we have $$ | \nabla v_k | = \left( 1 + (2 \al - 1) \frac{(2\al)^{N-1}(g(u_k))^{N(2\al - 1)}}{1 + (2\al)^{N-1}(g(u_k))^{N(2\al - 1)}}\right) |\nabla u_k| \leq 2 \al | \nabla u_k|.$$ This implies that $v_k \in \sobo(\om)$. Subtituting $v_k$ in (\ref{ps_functional_derivative}), we obtain

\begin{align*}
\left| \langle J_{\lambda}^{\prime}(u_k), v_k\rangle \right| = & \left| \int_{\om}{\left( 1 + (2 \al - 1) \frac{(2\al)^{N-1}(g(u_k))^{N(2\al - 1)}}{1 + (2\al)^{N-1}(g(u_k))^{N(2\al - 1)}}\right)|\nabla u_k(x)|^{N} }{dx} \right.\\
&- \lambda\int_{\om} \frac{|g(u_k)|^{2 \alpha N} }{\left( |x| \log \left(\frac{R}{|x|} \right) \right)^N} \left. -\int_\om \left(\int_\om \frac{H(y,g(u_k))}{|x-y|^{\mu}}dy \right) h(x,g(u_k)) g^{\prime} (u_k) ~dx \right| \\
\leq & 2 \e_k \al \lv u_k \rv .
\end{align*}	

\noindent Now, for some positive constants $C_1$ and $C_2$, we estimate
\begin{align*}
C_1 + C_2 \lv u_k \rv \geq&  ~
2 \theta_{0} J_{\lambda}(u_k) - \langle J_{\lambda}^{\prime} (u_k), v_k) \\
=& \int_{\om}{\left( \frac{2 \theta_{0}}{N} - 1 - (2 \al - 1) \frac{(2\al)^{N-1}(g(u_k))^{N(2\al - 1)}}{1 + (2\al)^{N-1}(g(u_k))^{N(2\al - 1)}}\right)|\nabla u_k(x)|^{N} }{dx} \\
& - \lambda \left( \frac{2 \theta_{0}}{2 \alpha N} - 1 \right)\int_{\om} \frac{|g(u_k)|^{2 \al N} }{\left( |x| \log \left(\frac{R}{|x|} \right) \right)^N}~ dx \\
& -\int_\om \left(\int_\om \frac{H(y,g(u_k))}{|x-y|^{\mu}}dy \right) (\theta_{0} H(x,g(u_k)) - h(x,g(u_k)) g(u_k)) ~dx \\
\geq& \left(  \frac{2 \theta_{0}}{N} - 1 - (2 \alpha - 1) \right) \int_{\om} |\grad u_k|^N - \frac{2 \al \lambda}{C_N} \left( \frac{ \theta_{0}}{ \al N} - 1 \right) \int_{\om} | \grad u_k|^N \\
\geq & ~ 2 \left( \frac{\theta_{0}}{N} -  \al  \right) \left( 1 - \frac{\lambda}{C_N} \right)\lv u_k\rv^N.
\end{align*}
In view of $(h_3)$, we have $ \theta_{0} > \al N $ and $\lambda < C_N$. This gives the boundedness of the sequence $\{ u_k \}$.
\end{proof}

\noindent 
Next, we see that the limit of Palais-Smale sequences turns out to be the weak solution of our problem. To prove this, we prove a series of lemma for each part of the equation.


\begin{lemma}\label{hardy_weak_limit}
Let $\{ u_k \}$ be a Palais-Smale sequence for $J_{\lambda}$ such that $u_k(x) \to u(x)$ pointwise a.e. in $ \om$. Then for any $\phi \in \sobo(\om)$, we have $$\int_{\om} \frac{|g(u_k)|^{2 \al N-2} g(u_k) g^{\prime}(u_k) \phi}{\left( |x| \log \left(\frac{R}{|x|} \right) \right)^N}~dx \to \int_{\om} \frac{|g(u)|^{2 \al N-2} g(u) g^{\prime}(u) \phi}{\left( |x| \log \left(\frac{R}{|x|} \right) \right)^N}~dx.$$
\begin{proof}
    Since $\{ u_k \}$ is a Palais-Smale sequence and $g, g^{\prime}$ are continuous functions, we have 
    $
        u_k(x) \to u(x),  \;\;
        g(u_k(x))  \to g(u(x)),  \;\;
        g^{\prime}(u_k(x))  \to g^{\prime}(u(x)) 
    $
    pointwise a.e. in $\om$ as $k \to \infty$. Then we obtain that $$ \frac{|g(u_k(x)|^{2 \al N-2} g(u_k(x)) g^{\prime}(u_k(x)) }{\left( |x| \log \left(\frac{R}{|x|} \right) \right)^{N-1}} \to \frac{|g(u(x))|^{2 \al N-2} g(u(x)) g^{\prime}(u(x)) }{\left( |x| \log \left(\frac{R}{|x|} \right) \right)^{N-1}}$$ pointwise a.e. in $ \om$ as $k \to \infty$. From $(g_6), (g_{10})$, we deduce that $\left\{ \frac{|g(u_k)|^{2 \al N-2} g(u_k) g^{\prime}(u_k) }{\left( |x| \log \left(\frac{R}{|x|} \right) \right)^{N-1}} \right\} \in L^{\frac{N}{N-1}}(\om)$ is uniformly bounded. Then upto a subsequence it converges weakly to some $w$. But as weak limit and pointwise limit coincides, we obtain $w = \frac{|g(u)|^{2 \al N-2} g(u) g^{\prime}(u) }{\left( |x| \log \left(\frac{R}{|x|} \right) \right)^{N-1}}$.  
    
Moreover, as $\phi \in \sobo (\om)$, then from the Hardy's inequality, we deduce that $ \frac{\phi}{|x| \log \left( \frac{R}{|x|} \right)} \in L^N(\om)$. This implies our assertion.
\end{proof}
\end{lemma}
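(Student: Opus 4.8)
The plan is to exploit the factorization of the singular Hardy weight, writing
$$\frac{1}{\left(|x|\log(R/|x|)\right)^N} = \frac{1}{\left(|x|\log(R/|x|)\right)^{N-1}}\cdot\frac{1}{|x|\log(R/|x|)},$$
and to show that the first factor, combined with the nonlinearity, produces a sequence bounded in $L^{\frac{N}{N-1}}(\om)$, while the second factor pairs with $\phi$ to give an element of $L^N(\om)$. Since $L^N$ is the dual of $L^{\frac{N}{N-1}}$, the convergence will then follow from weak--strong duality rather than from any attempt to pass the limit directly under the integral sign.

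First I would record the pointwise behaviour. Since $g,g'$ are continuous by $(g_1)$ and $u_k\to u$ a.e., the numerator $|g(u_k)|^{2\al N-2}g(u_k)g'(u_k)$ converges a.e. to $|g(u)|^{2\al N-2}g(u)g'(u)$, and dividing by the $(N-1)$-th power of the weight preserves this a.e. convergence. Setting
$$F_k := \frac{|g(u_k)|^{2\al N-2}g(u_k)g'(u_k)}{\left(|x|\log(R/|x|)\right)^{N-1}},$$
I would next secure a uniform bound for $\{F_k\}$ in $L^{\frac{N}{N-1}}(\om)$. The decisive computation is the exponent splitting $|g(u_k)|^{2\al N-1}g'(u_k)=|g(u_k)|^{2\al(N-1)}\cdot\bigl(|g(u_k)|^{2\al-1}g'(u_k)\bigr)$; applying $(g_6)$ to the first factor and $(g_{10})$ to the second, the powers of $2\al$ cancel exactly and one is left with the clean pointwise estimate $|g(u_k)|^{2\al N-1}g'(u_k)\le |u_k|^{N-1}$. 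Consequently
$$\int_\om |F_k|^{\frac{N}{N-1}}\,dx \le \int_\om \left(\frac{|u_k|}{|x|\log(R/|x|)}\right)^{N}\!dx \le \frac{1}{C_N}\|u_k\|^N,$$
by the critical Hardy inequality, and the right-hand side is bounded because every Palais--Smale sequence is bounded by Lemma \ref{ps_bounded}.

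Thus $\{F_k\}$ is bounded in the reflexive space $L^{\frac{N}{N-1}}(\om)$, so along a subsequence $F_k\weak w$; since $F_k$ also converges a.e. to $F:=|g(u)|^{2\al N-2}g(u)g'(u)/(|x|\log(R/|x|))^{N-1}$, the weak and a.e. limits must coincide, giving $w=F$, and a standard subsequence argument upgrades this to convergence of the whole sequence. Finally, for $\phi\in\sobo(\om)$ the Hardy inequality yields $\phi/(|x|\log(R/|x|))\in L^N(\om)$, precisely the dual exponent, so pairing $F_k\weak F$ against this fixed function gives
$$\int_\om \frac{|g(u_k)|^{2\al N-2}g(u_k)g'(u_k)\phi}{\left(|x|\log(R/|x|)\right)^N}\,dx = \int_\om F_k\,\frac{\phi}{|x|\log(R/|x|)}\,dx \longrightarrow \int_\om F\,\frac{\phi}{|x|\log(R/|x|)}\,dx,$$
which is the asserted limit. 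The main obstacle is the singularity of the weight, which blocks a naive dominated-convergence argument; the resolution rests on the $N=(N-1)+1$ splitting and the cancellation supplied by $(g_6)$ and $(g_{10})$, which keep the nonlinear factor exactly at the integrability threshold $L^{\frac{N}{N-1}}$ dictated by Hardy's inequality. A secondary point requiring care is the identification of the weak limit with the a.e. limit, which relies on the standard fact that an $L^p$-bounded sequence ($p>1$) converging a.e. has its weak limit equal to its a.e. limit.
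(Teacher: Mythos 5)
Your proposal is correct and follows essentially the same route as the paper: the same $(N-1)+1$ splitting of the Hardy weight, the same cancellation via $(g_6)$ and $(g_{10})$ giving a uniform $L^{\frac{N}{N-1}}$ bound through the critical Hardy inequality, identification of the weak limit with the a.e.\ limit, and pairing against $\phi/(|x|\log(R/|x|))\in L^N(\om)$. Your write-up is in fact slightly more explicit than the paper's about the exponent bookkeeping and the boundedness of the Palais--Smale sequence.
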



\noindent From \eqref{ps_functional_derivative_0} and \eqref{ps_functional_derivative}, we deduce that  
	\begin{equation}\label{wk-sol101}
	\int_\om \left(\int_\om\frac{H(y,g(u_k))}{|x-y|^\mu}dy\right)H(x,g(u_k))~dx \leq C_1,
    \end{equation}
    \begin{equation}\label{wk-sol102}
	\int_\om \left(\int_\om\frac{H(y,g(u_k))}{|x-y|^\mu}dy\right)h(x,g(u_k))g^{\prime}(u_k)u_k~dx  \leq C_2,
	\end{equation}
for some constants $C_1$ and $C_2$.\\
Now we establish some results that will help us to deal with the Choquard term in our equation.
\begin{lemma}\label{PS-ws} Assuming $(h_1)$-$(h_5)$ and let $g$ be defined as in \eqref{g}.
If $\{u_k\}$ is a Palais-Smale sequence of $J_{\lambda}$ then there exists $u \in \sobo(\om)$	such that upto a subsequence $ u_k \weak u$ in $\sobo(\om)$ and 
\begin{align}\label{3.23}
	\lim_{k \to \infty} \int_{\om} \left(\int_{\om}\frac{|H(y,g(u_k))-H(y,g(u))|}{|x-y|^{\mu}} dy \right) |H(x,g(u_k))-H(x,g(u))|  ~dx =0.
\end{align}
\end{lemma}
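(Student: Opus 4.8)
The plan is to first harvest the standard convergences coming from boundedness, then collapse the claimed double integral to a single‑variable statement via Hardy–Littlewood–Sobolev, and finally upgrade the resulting pointwise convergence to strong $L^{p}$‑convergence by an equi‑integrability argument, whose exponential part is the genuine difficulty.

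First I would invoke Lemma \ref{ps_bounded}: the Palais–Smale sequence $\{u_k\}$ is bounded in $\sobo(\om)$, so along a subsequence $u_k \weak u$ in $\sobo(\om)$. Since $\om$ is bounded and $N$ is the borderline exponent, the embedding $\sobo(\om)\hookrightarrow L^{q}(\om)$ is compact for every $q\in[1,\infty)$; hence $u_k\to u$ in $L^{q}(\om)$ for all such $q$ and, passing to a further subsequence, $u_k\to u$ a.e. in $\om$. Using the continuity of $g$ (property $(g_1)$) and of $H(x,\cdot)$, this already yields $H(x,g(u_k(x)))\to H(x,g(u(x)))$ for a.e.\ $x\in\om$. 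This $u$ is the weak limit named in the statement.

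Next, writing $p=\frac{2N}{2N-\mu}$ and applying the HLS inequality \eqref{hls_inequality} with $f=g=\bigl|H(\cdot,g(u_k))-H(\cdot,g(u))\bigr|$, the double integral in \eqref{3.23} is controlled by $C_{N,\mu}\,\bigl\|H(\cdot,g(u_k))-H(\cdot,g(u))\bigr\|_{L^{p}(\om)}^{2}$. Thus it suffices to establish the strong convergence $H(\cdot,g(u_k))\to H(\cdot,g(u))$ in $L^{p}(\om)$. Having the a.e.\ convergence above, the natural device is Vitali's convergence theorem, and the whole problem reduces to proving that the family $\bigl\{|H(\cdot,g(u_k))|^{p}\bigr\}$ is equi‑integrable. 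To analyse it I would use the growth bound \eqref{exponential_term_bound} together with $(g_6)$, which gives $|g(u_k)|^{\frac{2\al N}{N-1}}\le (2\al)^{\frac{1}{N-1}}|u_k|^{\frac{N}{N-1}}$, so that $|H(x,g(u_k))|\le \e|u_k|^{\al N}+C|u_k|^{r}\exp\bigl((1+\e)(2\al)^{\frac{1}{N-1}}|u_k|^{\frac{N}{N-1}}\bigr)$. The polynomial contribution is equi‑integrable at once because $u_k\to u$ in every $L^{q}(\om)$; the exponential contribution is where all the work lies.

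The hard part is the equi‑integrability of the exponential piece in the \emph{critical} Trudinger–Moser regime. The naive route would raise it to a power $p\,s$ with $s>1$ close to $1$, apply H\"older, and invoke \eqref{TM-ineq}; but this only succeeds if $p\,s\,(1+\e)(2\al)^{\frac{1}{N-1}}\|u_k\|^{\frac{N}{N-1}}$ stays below $\al_N$, and the mere bound $\|u_k\|\le M$ from Lemma \ref{ps_bounded} does not guarantee this. I expect to circumvent the smallness barrier by exploiting the a priori Choquard‑energy bounds \eqref{wk-sol101}–\eqref{wk-sol102}: conditions $(h_2)$ and $(h_3)$ convert \eqref{wk-sol102} into an $L^{1}$‑type control of $H(\cdot,g(u_k))$ weighted against the Riesz potential on the region where $u_k$ is large, and a generalized convergence lemma of de Figueiredo–Miyagaki–Ruf type then transforms this $L^1$ bound into the required equi‑integrability without any threshold assumption on $\|u_k\|$. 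Feeding equi‑integrability and a.e.\ convergence into Vitali gives $H(\cdot,g(u_k))\to H(\cdot,g(u))$ in $L^{p}(\om)$, and the HLS reduction then forces the double integral in \eqref{3.23} to zero, completing the argument.
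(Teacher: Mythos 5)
Your opening steps (boundedness via Lemma \ref{ps_bounded}, compact embedding of $\sobo(\om)$ into every $L^q(\om)$, a.e.\ convergence, and the pointwise convergence of $H(\cdot,g(u_k))$) agree with the paper, and you have correctly located the difficulty. The gap is in your resolution of it. After applying \eqref{hls_inequality} you reduce \eqref{3.23} to strong convergence of $H(\cdot,g(u_k))$ in $L^{p}(\om)$ with $p=\frac{2N}{2N-\mu}>1$, hence to equi-integrability of $|H(\cdot,g(u_k))|^{p}$. But the a priori information carried by a Palais--Smale sequence, namely \eqref{wk-sol101}--\eqref{wk-sol102}, is only $L^{1}$-type: combining \eqref{wk-sol102} with $(h_2)$, $(h_3)$, $(g_4)$ and bounding the Riesz kernel below by $(\mathrm{diam}\,\om)^{-\mu}$ yields a uniform bound on $\|H(\cdot,g(u_k))\|_{L^{1}(\om)}$ and on $\int_\om h(x,g(u_k))g(u_k)\,dx$, nothing stronger. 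The de Figueiredo--Miyagaki--Ruf convergence lemma you invoke is itself an $L^{1}$ statement (a.e.\ convergence plus a uniform bound on $\int h(x,u_k)u_k$ gives $L^{1}$ convergence of $h(\cdot,u_k)$); it cannot deliver equi-integrability of a $p$-th power with $p>1$. In the critical Trudinger--Moser regime such an upgrade genuinely fails: a concentrating Moser-type profile can keep $\int_\om H$ bounded while $\int_\om H^{p}$ diverges, and since the lemma is asserted for arbitrary Palais--Smale sequences with no restriction on the level, you also cannot fall back on a sub-threshold version of \eqref{TM-ineq}. As written, the decisive step of your argument is unsupported.

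The paper sidesteps this by never leaving the double-integral structure, so that the Riesz-potential weight --- which you discard when passing to $\|H(\cdot,g(u_k))\|_{L^{p}}$ --- remains available. It truncates in both variables, splitting $\om$ into $\{g(u_k)\ge\beta\}$ and $\{g(u_k)\le\beta\}$ (and analogously at a level $\Lambda$ in the $y$-variable), and on the tail sets uses $(h_2)$ and $(g_4)$ to estimate
\begin{equation*}
\int_{\om\cap\{g(u_k)\ge\beta\}}\Big(\int_\om\frac{H(y,g(u_k))}{|x-y|^\mu}\,dy\Big)H(x,g(u_k))\,dx\le \frac{2\al M_0}{\beta}\int_{\om}\Big(\int_\om\frac{H(y,g(u_k))}{|x-y|^\mu}\,dy\Big)h(x,g(u_k))g^{\prime}(u_k)u_k\,dx,
\end{equation*}
which is uniformly small for $\beta$ large by \eqref{wk-sol102}. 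On the truncated sets the integrands are bounded via \eqref{exponential_term_bound}, the HLS inequality is applied only to powers of $|u_k|$, and the generalized dominated convergence theorem gives the limit; assembling the pieces yields \eqref{3.23}. To repair your proof you would either have to adopt this truncation scheme or restrict to Palais--Smale sequences below the compactness threshold, which proves less than the lemma claims.
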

\begin{proof}
From the absolute continuity of $L^1$ functions, we know that if $\mc H\in L^1(\om)$ then for any $\e>0$ there exists a $\delta_\e>0$ such that
		\[\left| \int_{\om'} \mc H(x)~dx\right| <\e,\]
for any measurable set $\om'\subset \om$ with $|\om'|\leq \delta_\e$.

The inequality \eqref{wk-sol101}  implies that
		$$\left(\displaystyle\int_\om\frac{H(y,g(u_k))}{|x-y|^{\mu}}dy\right)H(\cdot,g(u_k)) \in L^{1}(\om).$$
Also, $u \in \sobo(\om)$ gives that
		$$\left(\displaystyle\int_\om\frac{H(y,g(u))}{|x-y|^{\mu}}dy\right)H(\cdot,g(u)) \in L^{1}(\om).$$
We now fix $\de_*>0$, and since we have $ \displaystyle\om = \cup_{\beta \in \real} \{ g(u) \geq \beta \}$, there exists $s'$ such that for $\beta > s'$, we obtain
\begin{align}\label{bb}
			&\int_{\om \cap\{g(u)\geq \beta\}} \left(\int_{\om} \frac{H(y,g(u))}{|x-y|^{\mu}} dy \right)H(x,g(u))  ~dx  \leq \de_*.
\end{align}
Choose $\beta> \max\left\{1,\frac{2 \al C_2 M_0}{{\de_*}}, s', R_0\right\}$. Then by using $(h_2)$, Lemma \ref{L1}-$(g_4)$ and \eqref{wk-sol102}, we deduce
		\begin{align}\label{b}
		&\int_{\om \cap\{g(u_k)\geq \beta\}} \left(\int_{\om} \frac{H(y,g(u_k))}{|x-y|^{\mu}} dy \right)H(x,g(u_k))  ~dx \notag\\&\leq M_0 \int_{\om \cap\{g(u_k)\geq \beta\}} \left(\int_{\om} \frac{H(y,g(u_k))}{|x-y|^{\mu}}dy\right) h(x,g(u_k))  ~dx\notag\\
		&= M_0 \int_{\om \cap\{g(u_k)\geq \beta\}} \left(\int_{\om} \frac{H(y,g(u_k))}{|x-y|^{\mu}}dy\right) \frac{h(x,g(u_k))g(u_k)}{g(u_k)}  ~dx\notag\\
		&\leq \frac{2\al M_0}{ \beta}\int_{\om \cap\{g(u_k)\geq \beta\}} \left(\int_{\om} \frac{H(y,g(u_k))}{|x-y|^{\mu}} dy\right) {h(x,g(u_k)) g^{\prime}(u_k)u_{k}} ~dx<\de_*.
		\end{align}
  Then, by employing \eqref{bb} and \eqref{b}, we have
		\begin{align*}
		&\left| \int_{\om} \left(\int_{\om}\frac{H(y,g(u_k))}{|x-y|^{\mu}} dy \right) H(x,g(u_k)) ~dx-  \int_{\om} \left(\int_{\om}\frac{H(y,g(u) )}{|x-y|^{\mu}} dy \right) H(x,g(u)) ~dx\right|\\
		&\leq 2\de_*+ \left|\int_{\om \cap \{g(u_k)\leq \beta\}} \left(\int_{\om}\frac{H(y,g(u_k))}{|x-y|^{\mu}} dy \right) H(x,g(u_k)) ~dx \right. \\
    &\left. -  \int_{\om\cap \{g(u)\leq \beta\}} \left(\int_{\om}\frac{H(y,g(u))}{|x-y|^{\mu}} dy \right) H(x,g(u)) ~dx\right|.
		\end{align*}
	Next, we show that as $k \to \infty$
		\begin{align*}
		\int_{\om \cap \{g(u_k)\leq \beta\}} &\left(\int_{\om} \frac{H(y,g(u_k))}{|x-y|^{\mu}} dy \right) H(x,g(u_k))~dx  \to \\
  &\int_{\om \cap \{g(u)\leq \beta\}} \left(\int_{\om}\frac{H(y,g(u))}{|x-y|^{\mu}} dy \right) H(x,g(u)) ~dx.
		\end{align*}
	So, to prove this, first, we need pointwise convergence of the integrands. \\ As $\left(\displaystyle\int_\om\frac{H(y,g(u))}{|x-y|^{\mu}}dy\right)H(\cdot,g(u)) \in L^{1}(\om)$,  Fubini's Theorem gives
		\begin{align*}
		&\lim_{\Lambda \to \infty} \int_{\om \cap\{g(u)\leq \beta\}}\left(\int_{\om\cap\{g(u)\geq \Lambda\}}\frac{H(y,g(u))}{|x-y|^{\mu}}dy\right)H(x,g(u))~dx\\
		&= \lim_{\Lambda \to \infty} \int_{\om \cap\{g(u)\geq \Lambda\}}\left(\int_{\om \cap\{g(u)\leq \beta\}}\frac{H(y,g(u))}{|x-y|^{\mu}}dy\right)H(x,g(u))~dx=0.
		\end{align*}
Again, for the same $\de_*$, there exists $s''$ such that for $\Lambda > s''$, we have 
	\[\int_{\om \cap\{g(u)\leq \beta\}} \left(\int_{\om \cap\{g(u)\geq \Lambda\} }\frac{H(y,g(u) )}{|x-y|^{\mu}} dy \right) H(x,g(u)) ~dx \leq \delta_*.\]
Thus, choosing  $\Lambda> \max\left\{1,\frac{2 \al C_2 M_0}{\de_*}, s'', R_0\right\}$ such that, using \eqref{wk-sol102}, $(h_2)$ and Lemma \ref{L1}-$(g_4)$, we deduce
		\begin{align*}
		&\int_{\om \cap\{g(u_k)\leq \beta\}} \left(\int_{\om \cap\{g(u_k)\geq \Lambda\}}\frac{H(y,g(u_k))}{|x-y|^{\mu}} dy \right)H(x,g(u_k)) ~dx\\
		& \leq M_0\int_{\om \cap \{g(u_k)\leq \beta\}} \left(\int_{\om \cap\{g(u_k)\geq \Lambda\}}\frac{ h(y,g(u_k))}{|x-y|^{\mu}} dy \right)H(x,g(u_k)) ~dx\\
		&\leq \frac{M_0}{\Lambda} \int_{\om \cap\{g(u_k)\leq \beta\}} \left(\int_{\om \cap\{g(u_k)\geq \Lambda\} }\frac{  h(y,g(u_k)) g(u_k)(y)}{|x-y|^{\mu}} dy \right) H(x,g(u_k)) ~dx\\
		&\leq \frac{2 \al M_0}{\Lambda} \int_{\om \cap\{g(u_k)\leq \beta\}} \left(\int_{\om \cap\{g(u_k)\geq \Lambda\} }\frac{  h(y,g(u_k))g^{\prime}(u_k) u_k(y)}{|x-y|^{\mu}} dy \right) H(x,g(u_k)) ~dx\\
		&\leq \frac{2 \al M_0}{\Lambda} \int_{\om} \left(\int_{\om }\frac{H(y,g(u_k))}{|x-y|^{\mu}} dy \right) h(x,g(u_k)) g^{\prime}(u_k)u_k ~dx\leq \de_*.
		\end{align*}
Thus, we obtain
		\begin{align*}
		&\left|\int_{\om \cap\{g(u)\leq \beta\}} \left(\int_{\om \cap\{g(u)\geq \Lambda\} }\frac{H(y,g(u) )}{|x-y|^{\mu}} dy \right) H(x,g(u)) ~dx\right.\\
		&\quad \quad \left.- \int_{\om \cap\{g(u_k)\leq \beta\}} \left(\int_{\om \cap\{g(u_k)\geq \Lambda\} }\frac{H(y,g(u_k))}{|x-y|^{\mu}} dy \right) H(x,g(u_k)) ~dx\right|\leq 2\de_*
		\end{align*}
Now we claim that as $k\ra \infty$, for fixed positive real numbers $\beta$ and $\Lambda$ the following holds:
		\begin{equation}\label{choq-new}
		\begin{split}
		&\left|\int_{\om\cap\{g(u_k)\leq \beta\}} \left(\int_{\om \cap\{g(u_k)\leq \Lambda\}}\frac{H(y,g(u_k))}{|x-y|^{\mu}} dy \right) H(x,g(u_k)) ~dx-\right.\\
		&\quad \left.\int_{\om \cap\{g(u)\leq \beta\}} \left(\int_{\om \cap\{g(u)\leq \Lambda\}}\frac{H(y,g(u) )}{|x-y|^{\mu}} dy \right) H(x,g(u)) ~dx \right|\ra 0.
		\end{split}
		\end{equation}
Being $g(u_k)$ and $g(u)$ uniformly bounded, we deduce that
		\begin{align*}
		&\left(\int_{\om \cap\{g(u_k)\leq \Lambda\} }\frac{H(y,g(u_k))}{|x-y|^{\mu}} dy \right) H(x,g(u_k))\chi_{ \om \cap \{g(u_k)\leq \beta\}}\\  &\qquad\ra \left(\int_{\om \cap \{g(u)\leq \Lambda\} }\frac{H(y,g(u))}{|x-y|^{\mu}} dy \right) h(x,g(u))\chi_{\om \cap \{g(u)\leq \beta\}}
		\end{align*}
pointwise a.e. as $k \to \infty$. Now using Lemma \ref{L1}-$(g_5)$ and choosing $r = \al N$ in \eqref{exponential_term_bound}, we get a constant $C_{\beta,\Lambda}>0$ depending on $\beta$ and $\Lambda$ such that
		\begin{align*}
		&\int_{\om \cap \{g(u_k)\leq \beta\}}\left( \int_{\om \cap \{g(u_k)\leq \Lambda\} }\frac{H(y,g(u_k))}{|x-y|^{\mu}} dy \right)  H(x,g(u_k))dx  \\
		&\leq  C_{\beta,\Lambda}\int_{\om \cap \{g(u_k)\leq \beta\}}\left( \int_{\{g(u_k)\leq \Lambda\} }\frac{|g(u_k(y))|^{\al N}}{|x-y|^{\mu}} dy \right)  |g(u_k(x))|^{\al N} dx \\
		&\leq  2 \al C_{\beta,\Lambda}\int_{\om \cap \{g(u_k)\leq \beta\}}\left( \int_{\{g(u_k)\leq \Lambda\} }\frac{|u_k(y)|^{N/2}}{|x-y|^{\mu}} dy \right)  |u_k(x)|^{N/2} dx \\
		& \leq 2 \al C_{\beta,\Lambda} \int_\om\int_{\om }\left(\frac{|u_k(y)|^{N/2}}{|x-y|^{\mu}}~dy  \right) |u_k(x)|^{N/2} ~dx\\
		& \leq 2 \al C_{\beta,\Lambda}C(N,\mu)\|u_k\|_{L^{\frac{N^2}{2N-\mu}}(\om)}^{N}.
		\end{align*}
In the final inequality, we applied the Hardy-Littlewood-Sobolev inequality. Furthermore, using the fact that $u_k \to u$ strongly in $L^q(\om)$ for each $q \in [1,\infty)$ and along with the generalised Lebesgue dominated convergence Theorem \cite{royden2010real}, we obtain \eqref{choq-new}. Consequently, we deduce that 
\begin{equation*}
		\int_{\om} \left(\int_{\om  }\frac{H(y,g(u_k))}{|x-y|^{\mu}} dy \right) H(x,g(u_k)) \ra \int_{\om}\left(\int_{\om }\frac{H(y,g(u))}{|x-y|^{\mu}} dy \right) H(x,g(u)).
\end{equation*}
Therefore, using the same approach above in the proof and the last result, we obtain 
		\begin{align}\label{3.25}
		&\int_{\om} \int_{\{|g(u)| \geq \Lambda\}} \frac{H(y,g(u))}{|x-y|^{\mu}} H(x,g(u)) dy ~dx=o(1),
        \end{align}
        \begin{align}
        &\int_{\om} \int_{\{|g(u_k)| \geq \Lambda\}} \frac{H(y,g(u_k))}{|x-y|^{\mu}} H(x,g(u_k)) dy ~dx = o(1),
		\end{align}
		\begin{align}
		\int_{\om} \int_{\{|g(u)| \geq \Lambda\}} \frac{H(y,g(u_k))}{|x-y|^{\mu}} H(x,g(u)) dy ~dx = o(1),
		\end{align}
		and
		\begin{equation}\label{3.27}
		\int_{\om} \int_{\{|g(u_k)| \geq \Lambda\}} \frac{H(y,g(u_k))}{|x-y|^{\mu}} H(x,g(u)) dy ~dx = o(1) \ \text{as}\ \Lambda \to \infty.
		\end{equation}
		So,
		\begin{equation*}
		\begin{split}
		 \int_{\om} \left(\int_{\om} \right. & \left. \frac{|H(y,g(u_k))-H(y,g(u))|}{|x-y|^{\mu}} dy \right) |H(x,g(u_k))-H(x,g(u))|  ~dx\\\leq&
		2\int_{\om} \left(\int_{\om}\frac{\chi_{\{g(u_k)\geq \Lambda\}}(y)H(y,g(u_k))}{|x-y|^{\mu}} dy \right) H(x,g(u_k)) ~dx \\
		&+4 \int_{\om} \left(\int_{\om}\frac{H(y,g(u_k))\chi_{g(u)\geq \Lambda}(x)H(x,g(u))}{|x-y|^{\mu}} dy \right) ~dx\\&+4 \int_{\om} \left(\int_{\om}\frac{\{\chi_{g(u_k)\geq \Lambda\}}(y)H(y,g(u_k))H(x,g(u))}{|x-y|^{\mu}} dy \right) ~dx\\
		&+2\int_{\om} \left(\int_{\om}\frac{\chi_{g(u)\geq \Lambda}(y)H(y,g(u))}{|x-y|^{\mu}} dy \right) H(x,g(u)) ~dx \\
		&+\int_\om\Bigg[\left(\int_\om\frac{|H(y,g(u_k))\chi_{g(u_k)\leq \beta}-H(y,g(u))\chi_{\{g(u)\leq \Lambda\}}|}{|x-y|^{\mu}}dy\right)\\&\qquad\qquad\qquad|H(x,g(u_k))\chi_{g(u_k)\leq \beta}-H(x,g(u))\chi_{\{g(u)\leq \Lambda\}}|\Bigg]~dx.
		\end{split}
		\end{equation*}
Then, from Lebesgue's dominated convergence Theorem, we infer that the last integration tends to $0$ as $k \to \infty.$ Hence, using \eqref{3.25}-\eqref{3.27}, we finally conclude \eqref{3.23}.	
\end{proof}

\noindent We use the following proposition borrowed from the \cite{boccaaooj1992almost} to get the pointwise convergence of the gradients.
\begin{proposition}\label{pointwise_gradient}
Let $\om \subset \rnn$ be a bounded domain. If $\{ u_k \}$ be a bounded sequence in $\sobo(\om)$ such that $u_k \weak u$ in $ \sobo(\om)$ and satisfies $$ - \Delta_N u_k = f_k + g_k \mbox{ in } D^{\prime} (\om)$$ where $f_k \to f$ in $W^{-1,p^{\prime}}(\om)$ and $g_k$ is a bounded sequence of Radon Measures, i.e., $$\langle g_k, \phi \rangle \leq C_K \lv \phi \rv_{\infty}$$ for all $\phi \in C_c^{\infty}(K)$ for each of the compact sets $K \subseteq \om$. Then there exists a subsequence $\{u_{k_n}\}$ of $\{u_k\}$ such that $$ \grad u_{k_n}(x) \to \grad u(x) \mbox{ a.e. as } n \to \infty \mbox{ in } \om.$$
\end{proposition}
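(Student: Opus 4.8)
The plan is to establish the almost-everywhere convergence of gradients through the classical Boccardo--Murat monotonicity method, where the key device is a Lipschitz truncation that absorbs the merely bounded measure term $g_k$. I would set
\[
E_k(x) := \left( |\grad u_k|^{N-2}\grad u_k - |\grad u|^{N-2}\grad u \right)\cdot \grad(u_k - u),
\]
which is nonnegative a.e.\ by the monotonicity of $\xi\mapsto|\xi|^{N-2}\xi$, and for which $E_k(x)=0$ forces $\grad u_k(x)=\grad u(x)$. The entire argument reduces to proving $E_k\to0$ in $L^1_{\mathrm{loc}}(\om)$ along a subsequence: together with the strict monotonicity of the $N$-Laplacian field this gives $\grad u_k\to\grad u$ a.e.\ on every compact subset, and a diagonal selection over an exhaustion of $\om$ by compacts yields the full conclusion. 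Since $\{u_k\}$ is bounded in $\sobo(\om)$, the compact embedding $\sobo(\om)\hookrightarrow L^N(\om)$ lets me pass to a subsequence with $u_k\to u$ in $L^N(\om)$ and a.e.\ in $\om$.

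First I would fix a compact $K\subset\subset\om$, a cutoff $\phi\in C_c^\infty(\om)$ with $0\le\phi\le1$ and $\phi\equiv1$ on $K$, and for $\delta>0$ the truncation $T_\delta(s)=\max\{-\delta,\min\{s,\delta\}\}$. Testing the equation against $v_k:=\phi\,T_\delta(u_k-u)\in\sobo(\om)$ gives
\[
\int_\om |\grad u_k|^{N-2}\grad u_k\cdot\grad v_k\,dx = \langle f_k, v_k\rangle + \langle g_k, v_k\rangle .
\]
Because $\|v_k\|_\infty\le\delta\|\phi\|_\infty$, the measure bound forces $|\langle g_k, v_k\rangle|\le C_K\delta\|\phi\|_\infty$, an $O(\delta)$ contribution uniform in $k$; this is precisely the step where truncation controls the non-convergent $g_k$. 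Since $T_\delta(u_k-u)\to0$ in $L^N$ and a.e.\ (being dominated by $\delta$), one has $v_k\weak0$ in $\sobo(\om)$, and with $f_k\to f$ in $W^{-1,N'}(\om)$ this gives $\langle f_k, v_k\rangle\to0$. Writing $\grad v_k = T_\delta(u_k-u)\grad\phi + \phi\,\grad(u_k-u)\chi_{\{|u_k-u|<\delta\}}$, the $\grad\phi$ piece vanishes since $|\grad u_k|^{N-2}\grad u_k$ is bounded in $L^{N'}$ while $T_\delta(u_k-u)\to0$ in $L^N$; adding and subtracting $|\grad u|^{N-2}\grad u$ in the other piece and pairing the weakly null $\grad T_\delta(u_k-u)$ against the fixed field $\phi|\grad u|^{N-2}\grad u\in L^{N'}$ leaves only the monotone term, so that
\[
\limsup_{k\to\infty}\int_K E_k\,\chi_{\{|u_k-u|<\delta\}}\,dx \le C\delta .
\]

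Finally I would upgrade this to a.e.\ convergence with the Boccardo--Murat exponent trick. Fixing $\theta\in(0,1)$, I split $\int_K E_k^\theta\,dx$ over $\{|u_k-u|<\delta\}$ and its complement. On the first set, Hölder with exponents $1/\theta$ and $1/(1-\theta)$ bounds it by $\big(\int_K E_k\chi_{\{|u_k-u|<\delta\}}\big)^\theta|K|^{1-\theta}$, whose $\limsup$ in $k$ is at most $(C\delta)^\theta|K|^{1-\theta}$; on the complement, $E_k$ is bounded in $L^1$ (each of its four terms is controlled by $\|\grad u_k\|_{L^N}^N$ and $\|\grad u\|_{L^N}^N$), while $|K\cap\{|u_k-u|\ge\delta\}|\to0$ since $u_k\to u$ in measure, so Hölder makes this piece vanish as $k\to\infty$. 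Thus $\limsup_k\int_K E_k^\theta\le(C\delta)^\theta|K|^{1-\theta}$ for every $\delta>0$, and letting $\delta\to0$ gives $E_k^\theta\to0$ in $L^1(K)$, hence $E_k\to0$ a.e.\ on $K$ along a further subsequence. Strict pointwise monotonicity of $\xi\mapsto|\xi|^{N-2}\xi$ then yields $\grad u_k\to\grad u$ a.e.\ on $K$, and the compact exhaustion with a diagonal subsequence finishes. I expect the main obstacle to be exactly the measure term $g_k$: it only stays bounded rather than converging, and the heart of the proof is that testing against the $\delta$-truncated difference confines its influence to size $O(\delta)$, which is then eliminated in the final $\delta\to0$ limit.
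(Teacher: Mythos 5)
The paper does not actually prove this proposition---it is quoted directly from Boccardo--Murat \cite{boccaaooj1992almost}---and your argument is precisely the proof given there: testing with $\phi\,T_\delta(u_k-u)$ so that the measure term contributes only $O(\delta)$, deducing $\limsup_{k}\int_K E_k\,\chi_{\{|u_k-u|<\delta\}}\,dx\le C\delta$, and then using the $\theta$-power/H\"older trick to get $E_k\to0$ a.e.\ and, by strict monotonicity and the pointwise coercivity of $\xi\mapsto|\xi|^{N-2}\xi$, $\grad u_k\to\grad u$ a.e. Your proof is correct; the only step you pass over silently is that the bound $\langle g_k,\phi\rangle\le C_K\lVert\phi\rVert_\infty$ is stated for $\phi\in C_c^\infty(K)$ and must be extended by a standard approximation argument to the bounded, compactly supported $W^{1,N}_0$ test function $\phi\,T_\delta(u_k-u)$.
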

\begin{lemma}\label{kc_ws} Assume conditions $(h_1)$ through $(h_6)$ are satisfied, and let $g$ be defined as in equation \eqref{g}. If $\{ u_k \}$ is a Palais-Smale sequence for $J_{\lambda}$. Then we have pointwise convergence of the gradients, i.e., $\grad u_k(x) \to \grad u(x) $ a.e. in $\om$ as $k\to\infty$. This leads to the conclusion that 
\begin{equation*}
		|\nabla u_k|^{N-2}\nabla u_k \rightharpoonup |\nabla u|^{N-2}\nabla u\; \text{weakly in}\; (L^{\frac{N}{N-1}}(\om))^N
\end{equation*}
as $k \to \infty$.
Furthermore, for all $w\in \sobo(\om),$  as $k\to\infty$, we have
		\begin{align*}\int_\om \left(\int_\om \frac{H(y, g(u_k))}{|x-y|^{\mu}}dy\right)h(x, g(u_k))g^{\prime}(u_k)w dx\to \int_\om \left(\int_\om \frac{H(y,g(u))}{|x-y|^{\mu}}dy\right)h(x,g(u))g^{\prime}(u)wdx.  \end{align*}
\end{lemma}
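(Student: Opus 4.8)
The three assertions will be handled in turn; the first two are routine once the equation solved by $u_k$ is recorded, and the third is the delicate point.

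\medskip
\noindent\textbf{Pointwise convergence of the gradients.} By Lemma \ref{ps_bounded} the sequence $\{u_k\}$ is bounded in $\sobo(\om)$, so (up to a subsequence) $u_k\weak u$. The Palais--Smale condition says that
\[
-\nlap u_k = \la\,\frac{|g(u_k)|^{2\al N-2}g(u_k)g'(u_k)}{\left(|x|\log(R/|x|)\right)^N}+\left(\int_\om\frac{H(y,g(u_k))}{|x-y|^\mu}\,dy\right)h(x,g(u_k))g'(u_k)+e_k
\]
in $(\sobo(\om))^*$, with $e_k\to0$. I would apply Proposition \ref{pointwise_gradient} with $f_k:=e_k\to0$ and with $g_k$ equal to the sum of the Hardy and Choquard terms; it then only remains to check that $g_k$ is bounded in the space of Radon measures, for which boundedness in $L^1(\om)$ suffices. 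For the Hardy term, $(g_6)$ and $(g_{10})$ bound the numerator $|g(u_k)|^{2\al N-2}g(u_k)g'(u_k)$ in modulus by $|u_k|^{N-1}$, and H\"older's inequality together with the Hardy inequality then controls its $L^1$-norm by $C\|u_k\|^{N-1}$. For the Choquard term I combine $(h_2)$ and Lemma \ref{L1}$(g_4)$ with \eqref{wk-sol102}, splitting $\om$ into $\{g(u_k)\le1\}$ (where $h$ and $H$ are bounded) and $\{g(u_k)>1\}$ (where $u_k$ is bounded below, so that $h(x,g(u_k))g'(u_k)\le C\,h(x,g(u_k))g'(u_k)u_k$); this gives the $L^1(\om)$ bound. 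Proposition \ref{pointwise_gradient} then yields $\grad u_k\to\grad u$ a.e.\ in $\om$.

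\medskip
\noindent\textbf{Weak convergence of the flux.} Since $\xi\mapsto|\xi|^{N-2}\xi$ is continuous, the a.e.\ convergence of the gradients gives $|\grad u_k|^{N-2}\grad u_k\to|\grad u|^{N-2}\grad u$ a.e.; as $\{u_k\}$ is bounded in $\sobo(\om)$, this vector field is bounded in $(L^{\frac{N}{N-1}}(\om))^N$. Boundedness in a reflexive $L^p$ ($p=\tfrac{N}{N-1}>1$) together with a.e.\ convergence forces weak convergence, which is the claim.

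\medskip
\noindent\textbf{The Choquard term.} Set $Q_k(x):=\left(\int_\om\frac{H(y,g(u_k))}{|x-y|^\mu}\,dy\right)h(x,g(u_k))g'(u_k)$ and write $Q$ for the same expression with $u$. The difficulty is that $w\in\sobo(\om)$ need not be bounded, while the critical exponential growth of $h$ rules out any uniform $L^s$-bound ($s>1$) on $h(x,g(u_k))$; I would therefore prove two facts and combine them. First, rearranging the equation above gives $Q_k=-\nlap u_k-\la(\text{Hardy term})-e_k$, and each term on the right is bounded in $(\sobo(\om))^*$ (the $N$-Laplacian since $\{u_k\}$ is bounded, the Hardy term by the estimate of the first step), so $\{Q_k\}$ is bounded in $(\sobo(\om))^*$; by Banach--Alaoglu and the separability of $\sobo(\om)$, a subsequence converges weakly-$*$ to some $Q^{*}\in(\sobo(\om))^*$. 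Second, I claim $Q_k\to Q$ strongly in $L^1(\om)$: setting $A_k:=\int_\om\frac{H(y,g(u_k))}{|x-y|^\mu}dy$, the convergence \eqref{3.23} together with the Hardy--Littlewood--Sobolev inequality gives $A_k\to A$ in $L^{\frac{2N}{\mu}}(\om)$ (hence a.e.\ along a subsequence), and with $u_k\to u$ a.e.\ and the continuity of $g,g',h$ this yields $Q_k\to Q$ a.e.; equi-integrability follows by splitting $\om=\{g(u_k)\le M\}\cup\{g(u_k)>M\}$, where on the first set $Q_k\le C_M A_k$ with $\{A_k\}$ bounded in $L^{\frac{2N}{\mu}}(\om)$, while on the second Lemma \ref{L1}$(g_4)$ gives $u_k>\tfrac{M}{2\al}$, so that $\int_{\{g(u_k)>M\}}Q_k\,dx\le\frac{2\al}{M}\int_\om Q_k u_k\,dx\le\frac{2\al C_2}{M}$ by \eqref{wk-sol102}. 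Vitali's theorem then gives $Q_k\to Q$ in $L^1(\om)$. Testing against $\phi\in C_c^\infty(\om)$, the $L^1$-convergence gives $\langle Q_k,\phi\rangle\to\int_\om Q\phi$ while the weak-$*$ convergence gives $\langle Q_k,\phi\rangle\to\langle Q^{*},\phi\rangle$; hence $Q^{*}=Q$, and by density of $C_c^\infty(\om)$ in $\sobo(\om)$ the weak-$*$ convergence reads $\int_\om Q_k w\,dx\to\int_\om Q\,w\,dx$ for every $w\in\sobo(\om)$, which is the asserted convergence of the Choquard term. The main obstacle is precisely this identification for unbounded $w$, and the point is that $(\sobo(\om))^*$-boundedness (coming from the equation) and strong $L^1$-convergence (coming from the tail estimate \eqref{wk-sol102} via $(g_4)$) together compensate for the absence of any uniform higher integrability of the nonlinearity.
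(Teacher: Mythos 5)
Your proof is correct and reaches all three conclusions, but it departs from the paper's argument at two points, in both cases in a way that streamlines it. For the $L^1$ bound on the Choquard density $Q_k$, the paper tests the Palais--Smale relation with $\psi/(1+u_k)$ for a cut-off $\psi$ and only obtains an $L^1_{\mathrm{loc}}$ bound by splitting $\{u_k<1\}$ against $\{u_k\ge 1\}$; you instead split on $\{g(u_k)\le 1\}$ versus $\{g(u_k)>1\}$ and invoke \eqref{wk-sol102} directly, which yields a global $L^1(\om)$ bound --- just note that on the first set you still need $\|A_k\|_{L^1(\om)}$ to be bounded, which follows from \eqref{wk-sol101} because $|x-y|^{-\mu}$ is bounded below on the bounded set $\om\times\om$. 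For the identification of the limit of the Choquard term, the paper passes through weak-$*$ convergence of $Q_k$ as Radon measures, proves absolute continuity of the limit measure via the equation and the convergence of the flux and Hardy terms, and invokes Radon--Nikodym; you instead prove strong $L^1(\om)$ convergence $Q_k\to Q$ by Vitali (a.e.\ convergence plus the equi-integrability coming from $(g_4)$ and \eqref{wk-sol102}) and combine it with $(\sobo(\om))^*$-equiboundedness of $Q_k$ to identify the weak-$*$ limit on $C_c^\infty(\om)$ and then on all of $\sobo(\om)$ by density. Your route buys a stronger intermediate conclusion (strong $L^1$ convergence of the nonlinearity) and makes the final density step legitimate via equiboundedness in the dual, a point the paper leaves implicit. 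One step you should justify explicitly: the claim that \eqref{3.23} together with HLS gives $A_k\to A$ in $L^{2N/\mu}(\om)$ rests on the Cauchy--Schwarz inequality for the positive-definite Riesz kernel combined with the dual form of HLS, namely $\left\| |x|^{-\mu}* f\right\|_{L^{2N/\mu}}\le C\left(\int_\om\int_\om \frac{f(x)f(y)}{|x-y|^\mu}\,dx\,dy\right)^{1/2}$ for $f\ge 0$; this is true but is not the literal statement of \eqref{hls_inequality}, so it deserves a line of proof.
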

\begin{proof}
		
Let $\om' \subset\subset \om$ and $\psi \in C_c^\infty(\om)$ such that $0\leq \psi \leq 1$ and $\psi \equiv 1$ in $\om' $. One can easily compute that
		\begin{equation}\label{k0}
		\begin{split}
		\left\| \frac{\psi}{1+u_k}\right\|^N = \int_\om \left|\frac{\nabla \psi}{1+u_k}- \psi \frac{\nabla u_k}{(1+u_k)^2} \right|^N~dx
		\leq 2^{N-1}(\|\psi\|^N+ \|u_k\|^N),
		\end{split}
		\end{equation}
which yields that $\frac{\psi}{1+u_k} \in W^{1,N}_0(\om)$. Now taking $v=\frac{\psi}{1+u_k}$  in \eqref{ps_functional_derivative} as a test function and using Lemma \ref{L1}-$(g_6), (g_{10})$ and \eqref{k0}, we obtain
		\begin{equation}\label{kc-ws-new1}
		\begin{split}
		&\int_{\om^{'}}\left( \int_\om \frac{H(y,g(u_k))}{|x-y|^{\mu}}dy\right)\frac{h(x,g(u_k))}{1+u_k}g^{\prime}(u_k)~dx\\
        & \leq \int_\om \left( \int_\om \frac{H(y,g(u_k))}{|x-y|^{\mu}}dy\right)\frac{h(x,g(u_k)) g^{\prime}(u_k)\psi}{1+u_k}~dx\\
		&\leq  \e_k \left\|\frac{\psi}{1+u_k}\right\| + \lambda \int_{\om} \frac{|g(u_k)|^{2 \al N-2} g(u_k) g^{\prime}(u_k) \psi}{\left( |x| \log \left(\frac{R}{|x|} \right) \right)^N(1+u_k)}  + \int_\om  |\nabla u_k|^{N-2}\nabla u_k \nabla \left( \frac{\psi}{1+u_k}\right)~dx\\
		& \leq \e_k 2^{\frac{N-1}{N}}(\|\psi\|+ \|u_k\|)+ \lambda \int_{\om} \frac{|u_k|^{N-1}\psi}{\left( |x| \log \left(\frac{R}{|x|} \right) \right)^N} \\
  & \hspace{0.5cm }+\int_\om |\nabla u_k|^{N-2}\nabla u_k \left(\frac{\nabla \psi}{1+u_k}-\psi\frac{\nabla u_k}{(1+u_k)^2}\right)~dx\\ 
		& \leq \e_k 2^{\frac{N-1}{N}}(\|\psi\|+ \|u_k\|) + \frac{ \lambda}{C_N} \lv u_k\rv^{N-1} \lv \psi \rv+\int_\om |\nabla u_k|^{N-1} \left( |\nabla \psi|+ |\nabla u_k|\right)~dx\\
		& \leq \e_k 2^{\frac{N-1}{N}}(\|\psi\|+ \|u_k\|)+ \left( 1 + \frac{ \lambda}{C_N}\right)\|\psi\|\|u_k\|^{N-1}+ \|u_k\|^N\leq C',
		\end{split}
		\end{equation}
where $C'$ is a positive constant and in the last line we used the fact that $\{u_k\}$ is bounded in $W^{1,N}_0(\om)$.
Again, using the boundedness of the sequence $\{u_k\}$, from \eqref{ps_functional_derivative}, we get
		\begin{align}\label{kc-ws-new2}
		&\int_{\om^{'}} \left( \int_\om \frac{H(y,g(u_k))}{|x-y|^{\mu}}dy\right){h(x,g(u_k))g^{\prime}(u_k)}{u_k}~dx\notag\\
		&\leq \int_{\om} \left( \int_\om \frac{H(y,g(u_k))}{|x-y|^{\mu}}dy\right){h(x,g(u_k))g^{\prime}(u_k)}{u_k}~dx \notag\\
		&\leq \e_k\|u_k\| + \left( 1 - \frac{ \lambda}{C_N}\right)\|u_k\|^N\leq C_2
		\end{align}
for some constant $C_2>0$. Combining \eqref{kc-ws-new1} and \eqref{kc-ws-new2}, we deduce
		\begin{align*}
		\int_{\om^{'}}\left( \int_\om \right. & \left. \frac{H(y,g(u_k))}{|x-y|^{\mu}}dy \right){h(x,g(u_k))g^{\prime}(u_k)}~dx \\
		 \leq & 2\int_{\om^{'}\cap \{u_k <1\}} \left( \int_\om\frac{H(y,g(u_k))}{|x-y|^{\mu}}dy\right)\frac{h(x,g(u_k))g^{\prime}(u_k)}{1+u_k}~dx\\   
        & + \int_{\om^{'}\cap \{u_k \geq 1\}} \left(\int_\om \frac{H(y,g(u_k))}{|x-y|^{\mu}}dy\right){h(x,g(u_k))g^{\prime}(u_k) u_k}~dx\\
		 \leq & 2\int_{\om^{'}} \left( \int_\om\frac{H(y,g(u_k))}{|x-y|^{\mu}}dy\right)\frac{h(x,g(u_k))g^{\prime}(u_k)}{1+u_k}~dx\\
        & + \int_{\om^{'}}\left( \int_\om \frac{H(y,g(u_k))}{|x-y|^{\mu}}dy\right){h(x,g(u_k))g^{\prime}(u_k) u_k}~dx\\
		 \leq & 2C_1+C_2  :=C_3.
		\end{align*}
Thus, the sequence $\{w_k\}:=\left\{\left( \int_\om\frac{H(y,g(u_k))}{|x-y|^{\mu}}dy\right){h(x,g(u_k))g^{\prime}(u_k)}\right\}$ is bounded in $L^1_{\text{loc}}(\om)$. \\
Now, $u_k$ satisfies the following equation weakly
$$ - \Delta_N u_k = J_{\lambda}^{\prime}(u_k) + \lambda \frac{|g(u_k)|^{2 \al N-2} g(u_k) g'(u_k)}{\left( |x| \log\left( \frac{R}{|x|} \right) \right)^N} + \left(\int_\om \frac{H(y,g(u_k))}{|x-y|^{\mu}}dy \right)h(x,g(u_k)) g^{\prime}(u_k).$$
Taking $f_k = J_{\lambda}^{\prime}(u_k)$ and $g_k = \lambda \frac{|g(u_k)|^{2 \al  N-2} g(u_k) g'(u_k)}{\left( |x| \log\left( \frac{R}{|x|} \right) \right)^N} + \left(\int_\om \frac{H(y,g(u_k))}{|x-y|^{\mu}}dy \right)h(x,g(u_k)) g^{\prime}(u_k) $.
Using the proposition \ref{pointwise_gradient}, we conclude that $\grad u_k(x) \to \grad u(x) ~a.e.$ in $\om$. \\

\noindent Now, as $\{ u_k \}$ is bounded, there exists a radon measure $\zeta^*$ such that, up to a subsequence, $u_k \rightharpoonup \zeta^*$ in the ${weak}^*$-topology as $k \to \infty$.  Hence,  we have
		\[\lim_{k \to \infty}\int_\om\int_\om \left( \frac{H(y,g(u_k))}{|x-y|^{\mu}}dy\right){h(x,g(u_k))g^{\prime}(u_k)}\eta ~dx = \int_\om \eta ~d\zeta^*\; \forall \eta \in C_c^\infty(\om). \]
Since $u_k$ satisfies \eqref{ps_functional_derivative}, we achieve
		\[\int_A \eta d\zeta^*= \lim_{k \to \infty} \left( \int_A |\nabla u_k|^{N-2}\nabla u_k \nabla \eta ~dx + \int_{A} \frac{|g(u_k)|^{2 \al N-2} g(u_k) g^{\prime}(u_k) \eta}{\left( |x| \log \left(\frac{R}{|x|} \right) \right)^N}~ dx\right) \;\;\forall\; A\subset \om,  \]
		which together with Lemma \ref{hardy_weak_limit} and weak convergence of $\{ |\grad u_k|^{N-2}\grad u_k \}$ yields that the Radon measure $\zeta^*$ is absolutely continuous with respect to the Lebesgue measure. So, there exists a function $\psi \in L^1_{\text{loc}}(\om)$ such that for any $\eta\in C^\infty_c(\om)$, it holds that $\int_\om \eta~ d\zeta^*= \int_\om \eta \psi~dx$, thanks to Radon-Nikodym Theorem.\\
Therefore, we obtain
		\begin{align*}&\lim_{k \to \infty}\int_\om\left( \int_\om \frac{H(y,g(u_k))}{|x-y|^{\mu}}dy\right){h(x,g(u_k))g^{\prime}(u_k)}\eta(x)~dx\\&\qquad = \int_\om \eta \psi~dx= \int_\om  \left( \int_\om \frac{H(y,g(u))}{|x-y|^{\mu}}dy\right){h(x,g(u))g^{\prime}(u)}\eta(x)~dx\;\;\forall\; \eta\in C^\infty_c(\om), \end{align*}
since $ C^\infty_c(\om)$ is dense in $W_0^{1,N}(\om)$, this completes the proof.
\end{proof}


\noindent \textbf{Proof of the Theorem \ref{main_theorem}:}
By the Mountain Pass Theorem, there exists a Palais-Smale sequence $\{u_k\}$ at level $\beta^*(\la)$. According to Lemma \ref{ps_bounded}, it follows that $u_k \weak u$ in $\sobo(\om)$ as $k \to \infty$. From Lemmas \ref{kc_ws} and \ref{hardy_weak_limit}, it follows that $u$ is a weak solution of problem \eqref{pq_new}.\\
Next, we have to show that $u \not \equiv 0$. On the contrary, assume that $ u \equiv 0$. Lemma \ref{PS-ws} give 
\[\int_\om \left(\int_\om \frac{ H(y,g(u_k))}{|x-y|^{\mu}}dy\right)H(x,g(u_k)) ~dx  \to 0\; \text{as}\; k \to \infty,\]
Applying this together with $\ds\lim_{k\ra \infty} J_{\la} (u_k) = \beta^*(\la)$, $(g_6), (g_{10})$ and Hardy inequality, we obtain $$ \left( 1 - \frac{  \lambda}{C_N} \right) \lim_{k\ra \infty}\frac{\lv u_k \rv^N}{N} \leq \lim_{k\ra\infty}\left[\frac{\lv u_k \rv^N}{N} -  \frac{\lambda}{2 \al N} \int_{\om} \frac{|g(u_k)|^{2 \al  N}}{\left( |x| \log \left( \frac{R}{|x|} \right) \right)^N}\right] = \beta^*(\la).$$
Thus for $k$ large enough, we obtain 
\begin{align}\label{k}
\frac{\lv u_k \rv^N}{N} \leq \ba^*(\la) \left( 1 - \frac{\lambda}{C_N} \right)^{-1}\leq  \ba^{*}(0) \left( 1 - \frac{\lambda}{C_N} \right)^{-1}<L,
\end{align}
for any $0 \leq \lambda < \frac{C_N(L  - \tau)}{ L}:=\lambda_{0}$, where $\tau = \beta^*(0)$, $L= \frac{1}{2 \al N} \left( \frac{2 N -\mu}{2N} \al_N\right)^{N-1}$. 
Furthermore, we show that 
\begin{align}\label{aaa}
	\int_{\om}\left( \int_\om\frac{H(y,g(u_k))}{|x-y|^{\mu}}dy\right)h(x,g(u_k))g^{\prime}(u_k)u_k~dx \to 0 \;\text{as}\; k \to \infty.
\end{align}
For this, $(g_4), (h_3), (h_5)$ and inequality (\ref{hls_inequality}) yield
\begin{align*}
	&\int_{\om}\left( \int_\om\frac{H(y,g(u_k))}{|x-y|^{\mu}}dy\right)h(x,g(u_k))g^{\prime}(u_k)u_k dx\notag\\
	&\leq\frac{1}{\theta_0}\int_{\om}\left( \int_\om\frac{h(y,g(u_k)) g(u_k)}{|x-y|^{\mu}}dy\right)h(x,g(u_k))g(u_k) dx\notag \\
	&\leq \frac{C_{t,N,\mu,r}}{\theta_0} \left(\int_{\om}|h(x,g(u_k))g(u_k)|^{\frac{2N}{2N-\mu}}~dx\right)^{\frac{2N-\mu}{N}}\notag\\
 &\leq C(\e,N,\al,r) \left(\int_{ \om}\left(|g(u_k)|^{\al N} +  |g(u_k)|^r \exp((1+\e)|g(u_k)|^{\frac{2\al N}{N-1}})\right)^{\frac{2N}{2N-\mu}}~dx\right)^{\frac{2N-\mu}{N}}\notag \\
	&  \leq C(\e,N,\al,r)\left( \|u_k\|_{L^{\frac{ N^2}{2N-\mu}}}^{N}+  \underbrace{\left(\int_{ \om}|g(u_k)|^\frac{2rN}{2N - \mu} \exp\left((1+\e)|g(u_k)|^{\frac{2\al N}{N-1}}\frac{2N}{2N-\mu}\right)~dx\right)^{\frac{2N-\mu}{N}}}_{I}\right)  \notag \\
\end{align*}
applying the H\"{o}lder inequality on $I$ with $q$ and $q^{\prime}$ being the conjugates, we obtain 
\begin{align*}
    I &\leq  \lv u_k \rv_{L^{\frac{2rq^{\prime}N}{2N - \mu}}}^{2r} \left(\int_{ \om} \exp\left(q(1+\e)|g(u_k)|^{\frac{2\al N}{N-1}}\frac{2N}{2N-\mu}\right)~dx\right)^{\frac{2N-\mu}{qN}} \\
    & \leq  \lv u_k \rv_{L^{\frac{2rq^{\prime}N}{2N - \mu}}}^{2r} \left(\int_{ \om} \exp\left(q(1+\e)(2\al)^{\frac{1}{N-1}} \frac{2N}{2N-\mu} \lv u_k\rv^{\frac{N}{N-1}} \left( \frac{|u_k|}{\lv u_k \rv}\right)^{\frac{N}{N-1}}\right)~dx\right)^{\frac{2N-\mu}{qN}}.
\end{align*}
In view of  Trudinger-Moser inequality (\ref{TM-ineq}), the exponential integral is finite if and only if $$q(1+\e)(2\al)^{\frac{1}{N-1}} \frac{2N}{2N-\mu} \lv u_k\rv^{\frac{N}{N-1}} \leq \al_N $$
that is $$\frac{\lv u_k \rv^N}{N} \leq \frac{1}{2 \al N} \left( \frac{2N - \mu}{2N} \alpha_N \right)^{N-1}  \left( \frac{1}{q(1+\e)} \right)^{N-1},$$
by choosing $q$ close to $1$ and $\e$ close to $0$ then (\ref{k}) implies that the integral is finite. Thus, our claim (\ref{aaa}) follows.\\
As $\langle J_{\lambda}^{\prime} (u_k),u_k\rangle \to 0$, we deduce that for all $|\lambda| < \lambda_0$, 
\begin{align*}
    \int_{\om} |\grad u_k|^N -\lambda \int_{\om} \frac{|g(u_k)|^{2 \al N-2}g(u_k) g^{\prime} (u_k) u_k}{\left( |x| \log \left( \frac{R}{|x|} \right) \right)^N} = o_k(1) 
\end{align*}
which implies 
\begin{align*}
    0 \leq \left( 1 - \frac{ \lambda }{C_N}\right) \lv u_k \rv^N \leq  \int_{\om} |\grad u_k|^N -\lambda \int_{\om} \frac{|g(u_k)|^{2 \al N-2}g(u_k) g^{\prime} (u_k) u_k}{\left( |x| \log \left( \frac{R}{|x|} \right) \right)^N} = o_k(1). 
\end{align*}
Therefore, we have $\ds\lim_{k \to \infty} \lv u_k \rv^N = 0$. But this implies that $\beta^*(\lambda) \leq \lim_{k \to \infty}\left( 1 + \frac{\lambda}{C_N} \right) \lv u_k \rv^N$, i.e., $\beta^*(\lambda) = 0$. This is a contradiction. Thus, $w \not \equiv 0$. \\
Now, as $u$ is a weak solution to \eqref{pq_new}. Thus, for all $\phi \in \sobo(\om)$, we have $ \langle J_{\la}^{\prime}(u),\phi \rangle = 0$. Putting $\phi = u^-$, and using $(g_6), (g_{10}),$ and Hardy inequality, we deduce
$$\left( 1 - \frac{\lambda}{C_N}\right) \int_{\om} |u^-|^N dx \leq 0. $$ Thus, $u \geq 0$ a.e. in $\om$. By the standard regularity theory in \cite{dibenedetto_regularity, tolksdorf_regularity}, we conclude $u \in C^{1,\alpha_0}(\om \setminus \{ 0 \})$ for some $\alpha_0 \in (0,1)$. Then by the strong maximum principle in \cite{vazquez_strong_maximum}, we deduce that $u > 0$ in $\om \setminus \{ 0 \}$.

\section{Non-Homogeneous Problem}
In this section, we prove the Theorem \ref{main_theorem_non_homo}. For this, consider the energy functional corresponding to non-homogeneous problem  (\ref{pq_non_homo}):
\begin{equation*}
\begin{split}
\tilde{J}_{\lambda, NH}(w)=& \frac{1}{N}\displaystyle\int_{\om }(1+(2\alpha)^{N-1}|w|^{N(2\al -1)})|\nabla w|^{N}dx  -\frac{\lambda}{2 \al N} \int_{\om} \frac{|w|^{2 \al N}}{\left( |x| \log\left( \frac{R}{|x|} \right) \right)^N}dx \\ & -\frac 12\int_{\om}\int_{\om} \frac{H(y,w(y))H(x,w(x))}{|x-y|^\mu}dxdy + \ka \int_{\om} q(x) w(x) ~ dx.
\end{split}
\end{equation*}
Using the same transformation $g(t)$ in \eqref{g}, the new functional is given by:
\begin{align*}
J_{\lambda, NH}(u)=& \frac{1}{N}\displaystyle\int_{\om }|\nabla u|^{N}dx -\frac{\lambda}{2 \al N} \int_{\om} \frac{|g(u)|^{2 \al N}}{\left( |x| \log\left( \frac{R}{|x|} \right) \right)
^N}~dx \notag \\ & -\frac 12\int_{\om}\int_{\om} \frac{H(y,g(u(y)))H(x,g(u(x)))}{|x-y|^\mu}dxdy + \kappa \int_{\om} q(x) g(u) dx.
\end{align*}
Now, this new energy functional is $C^1(W_0^{1,N},\real)$ and its derivative is given as:
\begin{align*}
\langle J^{\prime}_{\lambda, NH}(u),v \rangle=& \displaystyle\int_{\om }|\nabla u|^{N-1} \na u  \na v \, dx - \lambda \int_{\om} \frac{|g(u)|^{2\al N-2} g(u) g^{\prime}(u) v}{\left( |x| \log\left( \frac{R}{|x|} \right) \right)^N}~dx  \\ &- \int_{\om}\int_{\om} \frac{H(y,g(u(y)))h(x,g(u(x))) g^{\prime}(u(x))}{|x-y|^\mu}dxdy + \ka \int_{\om} q(x) g^{\prime}(u) v.
\end{align*}
The critical points of the new functional $J_{\lambda, NH}$ are the weak solutions of the following problem:
\begin{equation*}\label{pq_new_non_homo}
\tag{$ P_{\lambda,NH}^*$}\left\{
\begin{array}{l}
 \mathcal{L}_1(u) 
 = \left(\int_{\om} \frac{H(y,g(u(y)))}{|x-y|^{\mu}}dy\right) h(x,g(u((x)))g^{\prime}(u(x)) + \kappa q(x) g^{\prime}(u) \mbox{ in } \om , \\
u>0 \;\mbox{in} \; \om\setminus\{0\} \quad  u = 0 \mbox{ on } \partial \om,
\end{array}
\right.
\end{equation*}
where $\mathcal{L}_1(u) = - \nlap u  - \lambda \frac{|g(u)|^{2 \al N-2}g(u)g^{\prime}(u)}{\left( |x| \log\left(\frac{R}{|x|} \right)^N \right)} $.
The problems (\ref{pq_non_homo}) and (\ref{pq_new_non_homo}) are equivalent as one can easily see. Also, $u$ is a weak solution of (\ref{pq_new_non_homo}) if and only if $w = g(u)$ is a weak solution of (\ref{pq_non_homo}).\\
Now, to prove the Theorem \ref{main_theorem_non_homo}, we need some basic lemmas which we prove below:
\begin{lemma}
    For any $u \in \sobo (\om)$, we have $J_{\lambda, NH}(tu) \to -\infty$ as $t \to \infty$.
\end{lemma}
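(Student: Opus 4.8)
The plan is to follow the proof of Lemma~\ref{geometry_of_functional_at_infinity} almost verbatim, since for a fixed $u\in\sobo(\om)$ with $u\not\equiv0$ the non-homogeneous functional $J_{\lambda,NH}$ differs from $J_{\lambda}$ only through the extra term $\ka\int_{\om}q(x)g(u)\,dx$, which I will show is of strictly lower order in $t$ than the dominant Choquard contribution. Thus the mechanism forcing divergence is unchanged: the Choquard double integral supplies a negative term that eventually overwhelms everything else.

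First I would reproduce the lower bound for the Choquard term. By $(h_2)$ there is $C>0$ with $H(x,s)\ge C\exp(s/M_0)$ for $s\ge\max\{R_0,g(1),1\}$, whence $(g_8)$ gives $H(x,g(tu))\ge C_1(t|u|)^{k/(2\al)}-C_2$ for every $k\in\ntrl$, exactly as in Lemma~\ref{geometry_of_functional_at_infinity}. Substituting into the double integral and expanding produces a leading negative term $-\tfrac{C_1^2}{2}t^{k/\al}A$, where $A=\int_{\om}\int_{\om}|u(y)|^{k/(2\al)}|u(x)|^{k/(2\al)}|x-y|^{-\mu}\,dx\,dy>0$ is a fixed positive number, together with a cross term scaling like $t^{k/(2\al)}$ and a bounded remainder. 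Meanwhile the gradient term equals $\tfrac{t^N}{N}\|u\|^N$, growing like $t^N$, while the Hardy term is non-positive because $\lambda\ge0$, so it can only assist the divergence and may be dropped from the upper estimate.

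The only genuinely new ingredient, and the main point requiring care, is the linear term $\ka\int_{\om}q(x)g(tu)\,dx$. Reading $q\in(\sobo(\om))^*$ through the duality pairing, I would estimate $|\ka\langle q,g(tu)\rangle|\le\ka\|q\|_{(\sobo(\om))^*}\,\|g(tu)\|$; since $\grad g(tu)=g'(tu)\,t\,\grad u$ and $0<g'\le1$ by $(g_3)$, one has $\|g(tu)\|\le t\|u\|$, so this term is $O(t)$. (If one prefers to read it as an honest integral against $q\in L^p$, the pointwise bound $(g_6)$ gives the even slower growth $O(t^{1/(2\al)})$.) Collecting everything, $J_{\lambda,NH}(tu)$ is bounded above by $-\tfrac{C_1^2}{2}t^{k/\al}A$ plus terms of order $t^N$, $t^{k/(2\al)}$, $t$ and a constant. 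Choosing $k>N\al$ forces $k/\al>N\ge2>1$, so the negative $t^{k/\al}$ term dominates all of these, and hence $J_{\lambda,NH}(tu)\to-\infty$ as $t\to\infty$. The submultiplicative bound $\|g(tu)\|\le t\|u\|$ coming from $(g_3)$ is the key observation that keeps the new term subdominant; everything else is identical to the homogeneous case.
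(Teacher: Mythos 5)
Your proof is correct and follows essentially the same route as the paper: you reuse the Choquard lower bound from Lemma \ref{geometry_of_functional_at_infinity} to produce the dominant negative $t^{k/\al}$ term with $k>\al N$, and verify that the extra term $\ka\int_\om q(x)\,g(tu)\,dx$ grows only like $O(t)$. The only cosmetic difference is in that last estimate: the paper uses $(g_5)$ together with H\"older and Sobolev (reading $q\in L^{N/(N-1)}(\om)$) to get $C_3 t\ka\lv q\rv_{L^{N/(N-1)}}\lv u\rv$, whereas you use the duality pairing with $\lv g(tu)\rv\le t\lv u\rv$ from $(g_3)$; both yield the same conclusion.
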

\begin{proof}
Using $(g_5)$, H\"{o}lder inequality and Sobolev inequality, we deduce 
\begin{align}\label{k1}
 \int_{\om} q(x) g(u(x)) \, dx \leq C_3  \lv q \rv_{L^{\frac{N}{N-1}}(\om)} \lv u\rv.
\end{align}
Using $(g_5)$, \eqref{k1}, and following the similar lines of Lemma \ref{geometry_of_functional_at_infinity}, we get $k \in \ntrl$ and constants $C_1, C_2 > 0$ depending on $k$, such that
\begin{align*}
    J_{\lambda, NH}(tu) \leq & \frac{t^N}{N} \int_{\om} | \nabla u|^N  - \frac{1}{2} \int_{\om} \left(\int_{\om} \frac{H(y, g(tu))}{|x-y|^{\mu}}dy\right)H(x, g(t u))~dx + \ka \int_{\om} q(x) g(tu(x)) \, dx \\
    \leq & \frac{t^N}{N} \int_{\om} | \nabla u|^N - C_1^2 t^{\frac{k}{\al}}  \int_{\om} \int_{\om} \frac{|u(y)|^{\frac{k}{2 \al}} |u(x)|^{\frac{k}{2 \al}}}{|x-y|^{\mu}}~dxdy   +2C_1C_2 t^{\frac{k}{2 \al}}\int_{\om} \int_{\om}\frac{|u(x)|^{\frac{k}{2 \al}}}{|x-y|^{\mu}}~dxdy \\
    & - C_2^2 \int_{\om} \int_{\om} \frac{dx~dy}{|x-y|^{\mu}}+  C_3 t \ka \lv q \rv_{L^{\frac{N}{N-1}}(\om)} \lv u\rv.
\end{align*}
Choosing $k> \al N$, completes the proof of the Lemma.
\end{proof}

\noindent Next, we see that the functional possesses the remaining mountain pass geometry near the origin.
\begin{lemma}
Assuming $(h_1)-(h_5)$. Then there exists $\delta, \rho > 0$ such that $$ J_{\lambda, NH}(u) \geq \delta \mbox{ whenever } \lv u \rv  = \rho.$$
\end{lemma}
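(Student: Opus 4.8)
The plan is to reproduce the estimate of Lemma \ref{geometry_of_functional_near_zero} for the homogeneous functional and then absorb the extra linear term $\kappa\int_\om q(x)g(u)\,dx$. First I would bound the Choquard part exactly as before: using the Hardy--Littlewood--Sobolev inequality, the pointwise bound \eqref{exponential_term_bound}, together with $(g_5)$, Minkowski's inequality, Hölder's inequality, the Sobolev embedding and the Trudinger--Moser inequality \eqref{TM-ineq}, one obtains for $\lv u\rv=\rho$ small enough (so that $(1+\e)(2\al)^{1/(N-1)}\rho^{N/(N-1)}\tfrac{4N}{2N-\mu}<\al_N$) the same control
\[
\int_{\om}\int_{\om}\frac{H(y,g(u))H(x,g(u))}{|x-y|^\mu}\,dx\,dy
\;\le\; C\,\Big(\int_\om |H(x,g(u))|^{\frac{2N}{2N-\mu}}\Big)^{\frac{2N-\mu}{N}}
\;\le\; C_1\lv u\rv^{2\al N}+C_2\lv u\rv^{2r}.
\]
Combining this with $(g_6)$, $(g_{10})$ and the critical Hardy inequality to dominate the Hardy term by $\tfrac{\lambda}{C_N}\lv u\rv^N$, exactly as in Lemma \ref{geometry_of_functional_near_zero}, gives
\[
J_{\lambda,NH}(u)\;\ge\;\frac1N\Big(1-\frac{\lambda}{C_N}\Big)\lv u\rv^N
-C_1\lv u\rv^{2\al N}-C_2\lv u\rv^{2r}
+\kappa\int_\om q(x)g(u)\,dx.
\]

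The new ingredient is the linear forcing term. Using $(g_5)$ and estimate \eqref{k1} from the previous lemma, I would bound it by
\[
\Big|\kappa\int_\om q(x)g(u)\,dx\Big|\;\le\;\kappa\,C_3\lv q\rv_{L^{\frac{N}{N-1}}(\om)}\lv u\rv,
\]
so that, writing $t=\lv u\rv=\rho$, the functional satisfies $J_{\lambda,NH}(u)\ge \phi(\rho)$ where
\[
\phi(\rho)=\frac1N\Big(1-\frac{\lambda}{C_N}\Big)\rho^N
-C_1\rho^{2\al N}-C_2\rho^{2r}-\kappa\,C_3\lv q\rv_{L^{\frac{N}{N-1}}(\om)}\rho.
\]
Choosing as before $2r>N$ and recalling $2\al N>N$ and $\lambda<C_N$, the leading behaviour of $\tfrac1N(1-\lambda/C_N)\rho^N$ dominates the two higher-order error terms for $\rho$ small, so there is a fixed radius $\rho_0>0$ at which $\tfrac1N(1-\lambda/C_N)\rho_0^N-C_1\rho_0^{2\al N}-C_2\rho_0^{2r}=:2\delta_0>0$.

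The main obstacle, and the point where the argument genuinely differs from the homogeneous case, is that the linear term $-\kappa C_3\lv q\rv\rho$ is of order $\rho$, which for small $\rho$ eventually \emph{dominates} the $\rho^N$ term (since $N>1$); thus one cannot expect a strictly positive lower bound for \emph{every} $\kappa$. This is precisely why the theorem asks only for small nonnegative $\kappa$. I would fix the radius $\rho_0$ first (independently of $\kappa$), so that $\phi(\rho_0)\ge 2\delta_0-\kappa C_3\lv q\rv_{L^{N/(N-1)}}\rho_0$, and then choose $\kappa$ small enough that $\kappa C_3\lv q\rv_{L^{N/(N-1)}}\rho_0\le\delta_0$, yielding $J_{\lambda,NH}(u)\ge\delta_0=:\delta>0$ for all $\lv u\rv=\rho_0=:\rho$. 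This completes the proof, and the smallness of $\kappa$ enters exactly here in keeping the origin a local strict minimum separated from the value at the reference point produced by the previous lemma.
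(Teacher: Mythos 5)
Your proposal is correct and takes essentially the same route as the paper: it reuses the estimate of Lemma \ref{geometry_of_functional_near_zero} for the gradient, Hardy and Choquard terms, controls the forcing term by $\kappa C_3\lVert q\rVert_{L^{N/(N-1)}(\Omega)}\lVert u\rVert$ via \eqref{k1}, and then makes the origin a strict local barrier by taking $2r>N$ and $\kappa$ small. Your explicit ordering of the choices (fix $\rho_0$ independently of $\kappa$ first, then shrink $\kappa$ so the linear term eats at most half the margin) is exactly the content the paper compresses into ``choose $2r>N$, $\kappa>0$ small enough.''
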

\begin{proof}
Lemma \ref{geometry_of_functional_near_zero} and \eqref{k1} yield 
\begin{align*}
J_{\lambda, NH}(u) \geq & \frac{1}{N}\left( 1 - \frac{ \lambda}{C_N}\right) \lv u \rv ^N -  C_1 \lv u \rv ^{2 \al N} - C_2 \lv u \rv ^{2r} - \ka \lv q \rv_{L^{\frac{N}{N-1}}(\om)} \lv u\rv_{L^N(\om)} \\
&  = \lv u \rv \left[ \frac{1}{N}\left( 1 - \frac{\lambda}{C_N}\right) \lv u \rv ^{N-1} -  C_1 \lv u \rv ^{2 \al N-1} - C_2 \lv u \rv ^{2r-1} - \ka C_3 \lv q \rv_{L^{\frac{N}{N-1}}(\om)}  \right]
\end{align*}
where $C_1$ and $C_2$ are some constants depending on $N, r, \al,$ and $\e$. So, we choose $2r > N, \ka > 0$ small enough. Thus, there exists $\rho>0$ such that if $\lv u \rv  = \rho$, then $J_{\lambda, NH}(u) \geq \delta$.
\end{proof}

\noindent Thus, the functional satisfies the mountain pass geometry.

\begin{lemma}\label{ps_bounded_non_homo}
    If $\{ u_k \}$ is a Palais-Smale sequence for the functional $J_{\lambda,NH}$. Then $\{ u_k \}$ is bounded in $\sobo (\om)$.
\end{lemma}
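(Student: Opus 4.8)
The plan is to mirror the boundedness argument of Lemma \ref{ps_bounded}, carefully tracking the extra contribution of the source term $\ka q$. Let $\{u_k\}$ be a Palais--Smale sequence for $J_{\lambda,NH}$, so that $J_{\lambda,NH}(u_k)\to c$ and $J_{\lambda,NH}^{\prime}(u_k)\to 0$. As in the homogeneous case I would test the derivative with $v_k=g(u_k)/g^{\prime}(u_k)$. The bounds $|v_k|\leq 2\al|u_k|$ and $|\grad v_k|\leq 2\al|\grad u_k|$ coming from $(g_4)$, already established in Lemma \ref{ps_bounded}, show that $v_k\in\sobo(\om)$ with $\lv v_k\rv\leq 2\al\lv u_k\rv$; hence $|\langle J_{\lambda,NH}^{\prime}(u_k),v_k\rangle|\leq 2\al\e_k\lv u_k\rv$.

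Next I would form the combination $2\theta_0 J_{\lambda,NH}(u_k)-\langle J_{\lambda,NH}^{\prime}(u_k),v_k\rangle$ and note that it splits as the corresponding homogeneous expression plus the source terms. Because $v_k=g(u_k)/g^{\prime}(u_k)$, the derivative contributes $\ka\int_\om q\,g^{\prime}(u_k)v_k=\ka\int_\om q\,g(u_k)$, while the functional contributes $2\theta_0\ka\int_\om q\,g(u_k)$, so that together they give $(2\theta_0-1)\ka\int_\om q\,g(u_k)$. The homogeneous part is bounded below by $2\big(\tfrac{\theta_0}{N}-\al\big)\big(1-\tfrac{\la}{C_N}\big)\lv u_k\rv^N$ exactly as in Lemma \ref{ps_bounded}. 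Using $(g_5)$ and the estimate \eqref{k1}, the source term satisfies $\big|\ka\int_\om q\,g(u_k)\big|\leq \ka C_3\lv q\rv_{L^{\frac{N}{N-1}}(\om)}\lv u_k\rv$, i.e. it is at most linear in $\lv u_k\rv$.

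Combining, the upper side of the combination is controlled by $2\theta_0 c+o(1)+2\al\e_k\lv u_k\rv\leq C_1+C_2\lv u_k\rv$, while the lower side yields the superlinear term minus the linear source contribution. This gives
\[
2\Big(\tfrac{\theta_0}{N}-\al\Big)\Big(1-\tfrac{\la}{C_N}\Big)\lv u_k\rv^N \;\leq\; C_1+\Big(C_2+(2\theta_0-1)\ka C_3\lv q\rv_{L^{\frac{N}{N-1}}(\om)}\Big)\lv u_k\rv ,
\]
where the coefficient on the left is strictly positive because $(h_3)$ forces $\theta_0>\al N$ and we assume $\la<C_N$. Since $N\geq 2>1$, the term $\lv u_k\rv^N$ dominates the linear right-hand side as $\lv u_k\rv\to\infty$, which forces $\{u_k\}$ to be bounded in $\sobo(\om)$.

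As for difficulties, I expect no genuine obstacle beyond the homogeneous lemma. The one new feature is the inhomogeneity $\ka q$, and the only point that must be handled correctly is that its contribution to the Ambrosetti--Rabinowitz-type combination is linear in $\lv u_k\rv$, via $(g_5)$ and the bound \eqref{k1}, and is therefore absorbed by the superlinear term $\lv u_k\rv^N$ irrespective of the size of $\ka$.
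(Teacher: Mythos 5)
Your proof is correct and follows essentially the same route as the paper: testing with $v_k=g(u_k)/g^{\prime}(u_k)$, forming the Ambrosetti--Rabinowitz combination $2\theta_0 J_{\lambda,NH}(u_k)-\langle J_{\lambda,NH}^{\prime}(u_k),v_k\rangle$, reducing to the homogeneous lower bound $2\bigl(\tfrac{\theta_0}{N}-\al\bigr)\bigl(1-\tfrac{\la}{C_N}\bigr)\lv u_k\rv^N$ plus the source contribution $(2\theta_0-1)\ka\int_{\om}q\,g(u_k)$, and absorbing the latter, which is linear in $\lv u_k\rv$, into the superlinear term. Your remark that no smallness of $\ka$ is needed at this step is also accurate; the paper concludes ``for small values of $\ka$,'' but its own final inequality, identical to yours, already yields boundedness for every $\ka>0$.
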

\begin{proof}
Let $\{ u_k \}$ be a Palais-Smale sequence associated with the functional $J_{\lambda,NH}$. Then $J_{\lambda, NH}(u_k) \to c$ for some $c \in \real $ and $J_{\lambda, NH}^{\prime}(u_k) \to 0 $. Thus, we have
\begin{align*}
C_1 + C_2 \lv u_k \rv \geq & ~2 \theta_{0} J_{\lambda, NH}(u_k) - \left\langle J_{\lambda, NH}^{\prime} (u_k), \frac{g(u_k)}{g^{\prime}(u_k)}\right\rangle\\
 \geq & ~ 2 \left( \frac{\theta_{0}}{N} -  \al  \right) \left( 1 - \frac{\lambda}{C_N} \right)\int_{\om} |\grad u_k|^N + \ka (2 \theta_{0} - 1) \int_{\om} q(x) g(u_k) dx\\
 \geq & ~\lv u_k \rv \left[ 2 \left( \frac{\theta_{0}}{N} -  \al  \right) \left( 1 - \frac{\lambda}{C_N} \right)\lv u_k \rv^{N-1} - \ka (2 \theta_{0} -1) C_3 \lv q \rv_{L^{\frac{N}{N-1}}(\om)}\right],
\end{align*}
where $C_3$ is some constant. In view of $(h_3)$, we have $ \theta_{0} >  \al N$. This gives the boundedness of the sequence $\{ u_k \}$ for small values of $\ka$.
\end{proof}

\noindent \textbf{Proof of the Theorem \ref{main_theorem_non_homo}:} 
    Since the functional $J_{\lambda, NH}$ corresponding to the equation \eqref{pq_new_non_homo} satisfies the mountain pass geometry. Using the Mountain Pass Theorem in \cite{rabinowitz1986minimax}, we obtain a Palais-Smale sequence $\{u_k\}$. In view of Lemma \ref{ps_bounded_non_homo}, $\{ u_k \}$ is bounded and possesses one weakly convergent subsequence $\{ u_{k_n}\}$ in $\sobo(\om)$ which converges to $u \in \sobo(\om)$. We conclude that $u$ is a non trivial weak solution by Lemmas \ref{hardy_weak_limit} and \ref{kc_ws}.\\

\noindent{\bf Acknowledgment:} 
The second author would like to thank the Science and Engineering Research Board, Department of Science and Technology, Government of India for the financial support under the grant SPG/2022/002068.\\

\noindent \textbf{Conflict of interest:} All authors certify that there is no actual or potential conflict of interest in relation to this article.

\bibliographystyle{abbrv}
\bibliography{quasilinear}
\end{document}